\newcommand\reallywidehat[1]{%
\savestack{\tmpbox}{\stretchto{%
  \scaleto{%
    \scalerel*[\widthof{\ensuremath{#1}}]{\kern-.6pt\bigwedge\kern-.6pt}%
    {\rule[-\textheight/2]{1ex}{\textheight}}
  }{\textheight}%
}{0.5ex}}%
\stackon[1pt]{#1}{\tmpbox}%
}
\definecolor{myred}{rgb}{0.75,0,0}
\definecolor{mygreen}{rgb}{0,0.5,0}
\definecolor{myblue}{rgb}{0,0,0.65}
\newcommand{\daniel}[1]{{\color{blue} \sf
    $\spadesuit\spadesuit\spadesuit$ DANIEL: [#1]}}
\newcommand{\josh}[1]{{\color{red} \sf
    $\spadesuit\spadesuit\spadesuit$ JOSH: [#1]}}
\theoremstyle{plain}
\newtheorem{theorem}[subsubsection]{Theorem}
\newtheorem{proposition}[subsubsection]{Proposition}
\newtheorem{lemma}[subsubsection]{Lemma}
\newtheorem{corollary}[subsubsection]{Corollary}
\theoremstyle{definition}
\newtheorem{definition}[subsubsection]{Definition}
\newtheorem{remark}[subsubsection]{Remark}
\newtheorem{example}[subsubsection]{Example}
\newtheorem{question}[subsubsection]{Question}
\newtheorem{conjecture}[subsubsection]{Conjecture}
\theoremstyle{remark}
\newtheorem{notation}[subsubsection]{Notation}
\numberwithin{equation}{section}
\newcommand\nc{\newcommand}
\nc\on{\operatorname}
\nc\renc{\renewcommand}
\DeclareMathOperator\Gr{Gr}
\DeclareMathOperator\Jac{Jac}
\DeclareMathOperator\GL{GL}
\DeclareMathOperator\SL{SL}
\DeclareMathOperator\End{End}
\def\fpbar{{\overline{\mb{F}}_p}}
\def\Spec{{\rm Spec}}
\newcommand{\defeq}{\vcentcolon=}
\newcommand{\Qlbar}{\overline{\mathbb{Q}}_{\ell}}
\title{Geometric local systems on the projective line minus four points}
\author{Yeuk Hay Joshua Lam and Daniel Litt}
\date{\today}
\begin{document}

\begin{abstract}
Let $J(m)$ be an $m\times m$ Jordan block with eigenvalue $1$. For $\lambda\in \mathbb{C}\setminus\{0,1\}$, we explicitly construct all rank $2$ local systems of geometric origin on $\mathbb{P}^1\setminus\{0,1,\lambda, \infty\}$, with local monodromy conjugate to $J(2)$ at $0,1,\lambda$ and conjugate to $-J(2)$ at $\infty$. The construction relies on Katz's middle convolution operation. We use our construction to prove two conjectures of Sun-Yang-Zuo (one of which was proven earlier by Lin-Sheng-Wang; the other was proven independently from us by Yang-Zuo).
\end{abstract}

\maketitle

\section{Introduction}\label{section:introduction}
It was known (in some sense) to Riemann \cite[Introduction]{katz1996rigid}, in his work on hypergeometric functions, that all rank two local systems on $\mathbb{P}^1\setminus\{0,1,\infty\}$ with trivial determinant and quasi-unipotent local monodromy are \emph{of geometric origin} --- that is, they arise in the cohomology of a family of varieties over $\mathbb{P}^1\setminus\{0,1,\infty\}$. The case of $\mathbb{P}^1$ minus $4$ points is more complicated; Beauville \cite{beauville1982familles} famously classified the $D\subset \mathbb{P}^1, |D|=4$ such that $\mathbb{P}^1\setminus D$ carries a family of elliptic curves with stable reduction along $D$, or equivalently, carries a rank $2$ $\mathbb{Z}$-local system of geometric origin with unipotent local monodromy. In general, rank $2$ motivic local systems on $\mathbb{P}^1$ minus four points are poorly understood (and the situation when one increases the rank or the number of points deleted is completely mysterious). 

The goal of this paper is to explicitly write down all local systems of geometric origin on $\mathbb{P}^1\setminus\{0,1,\infty,\lambda\}$, with local monodromies conjugate to $$\begin{pmatrix} 1 & 1 \\ 0 & 1\end{pmatrix}$$ at $0, 1,\lambda$ and with local monodromy conjugate to $$\begin{pmatrix} -1 & 1 \\ 0 & -1\end{pmatrix}$$ at $\infty$. In so doing we give a short proof of two conjectures of Sun-Yang-Zuo \cite[Conjectures 4.8 and 4.10]{syz} on motivic local systems on $X=\mathbb{P}^1\setminus\{0,1,\infty,\lambda\}$. \cite[Conjecture 4.8]{syz} was earlier proven in \cite{shengtorsion} by different methods. 

\begin{remark}
	While this paper was in preparation Yang and Zuo \cite{zuolong} independently claimed a fascinating proof of \cite[Conjecture 4.10]{syz},  via completely different techniques from ours\footnote{Our work began in December of 2022, whereas the work of loc.cit. had been ongoing for several years, as we were informed by the authors of \cite{zuolong}}. Their 128-page proof relies on $p$-adic Hodge theory and the Langlands correspondence; by contrast our proof is only a few pages and we \emph{explicitly} write down the relevant local systems and the families in whose cohomology they appear. 
\end{remark}
\subsection{Main results}
Let $X$ be a smooth complex curve, and in the case when $X$ is not proper, we pick a smooth  compactification $\overline{X}$ of $X$. Recall that a $\mb{C}$-local system $\mb{V}$ on $X$ is said to be of geometric origin if there exists a dense open $U\subset X$, and a  family of smooth proper algebraic varieties $\pi: \mc{Y} \to U$ such that $\mb{V}$ appears as a subquotient of $R^i\pi_*\mb{C}$ for some $i\geq 0$. (The appearance of $U$ in the definition is a slightly technical point which the reader may ignore at first pass.)

Motivated by the previous work of Faltings \cite{arakelovav}, Deligne showed that, for a fixed $X$, there are only finitely many $\mb{Q}$-local systems of fixed rank $n$ which are of geometric origin. One of our main motivations was whether a strengthened form of Deligne's theorem \cite{delignefinite} could hold:
\begin{question}\label{question}
Can there be infinitely many $\mb{C}$-local systems of rank $n$  on $X$ which are of geometric origin, and moreover whose  local monodromies at the boundary $\overline{X}-X$ are fixed?
\end{question}

\begin{remark}
The condition on local monodromies is certainly necessary for the above question to be interesting: indeed, the hypergeometric local systems of rank two on $X=\mb{P}^1-\{0, 1, \infty\}$ are all of geometric origin. However, the local monodromies at $0,1,\infty$ have eigenvalues being $m$-th roots of unity, and upon bounding $m$ there are only finitely many such hypergeometric local systems. We find the monodromy condition natural since it implies  that  we are considering points of the \emph{relative character varieties}, which are the natural replacement of character varieties in the case of non-proper $X$.

Note that \cite{litt2021arithmetic} shows that there are only finitely many $K$-local systems of geometric on $X$ if $K$ embeds in any finite extension of $\mathbb{Q}_p$, for any prime $p$.
\end{remark}

We now specialize to our case of interest. For any $\lambda \neq 0, 1$, let $X$ be  the curve $\mathbb{P}^1-\{0,1,\lambda, \infty\}$. We consider rank two local systems on $X$ satisfying the following condition $(\star)$: namely with local monodromy  conjugate to $$\begin{pmatrix} 1 & 1 \\ 0 & 1\end{pmatrix}$$ at $0, 1,\lambda$ and  local monodromy conjugate to $$\begin{pmatrix} -1 & 1 \\ 0 & -1\end{pmatrix}$$ at $\infty$. We recall the following:
\begin{definition}
A local system $\mb{V}$ on a manifold $Y$ is said to be MCG-finite if the set of isomorphism classes of local systems $\{f^*\mathbb{V}\}$ obtained by acting on $\mb{V}$ by elements $f$ of the mapping class group of $Y$ is finite. 
\end{definition}
We may now state our first result.
\begin{theorem}\label{thm:main1}
 For any $\lambda \neq 0,1$, there exists infinitely many   rank two local systems of geometric origin on $X$ satisfying condition $(\star)$. Moreover, any rank two local system of geometric origin  on $X$ satisfying $(\star)$ is MCG-finite. 
\end{theorem}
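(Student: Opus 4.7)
The plan is to deduce both assertions from an explicit construction, via Katz's middle convolution, of \emph{all} rank $2$ local systems of geometric origin on $X$ satisfying $(\star)$. The construction itself, which will be the technical heart of the paper, should realize each such local system as the output of a middle convolution $\mathrm{MC}_\chi(\mathbb{W})$ applied to an auxiliary local system $\mathbb{W}$ on a punctured $\mathbb{P}^1$ (for instance the Gauss-Manin local system of the Legendre pencil, possibly pulled back to an auxiliary cover of $X$) together with an appropriately chosen rank one Kummer local system. Since middle convolution is defined cohomologically as a relative $R^1p_{2!}(p_1^*\mathbb{W}\otimes \mathcal{L}_\chi(x-y))$, it preserves the class of local systems of geometric origin; and Katz's combinatorial formulas for the effect of $\mathrm{MC}_\chi$ on local monodromies allow one to target the specific conjugacy classes appearing in $(\star)$.

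For the first assertion, infiniteness follows by varying the input data: letting the auxiliary elliptic curve (or the parameter of an auxiliary covering map) move, and letting the character $\chi$ range over suitable roots of unity, produces infinitely many pairwise non-isomorphic outputs, distinguishable for instance by Frobenius traces at places of good reduction or by the underlying variation of Hodge structure. For the second assertion, I would first note that the mapping class group $\mathrm{MCG}(X)$ preserves both the condition $(\star)$ (since conjugacy classes of local monodromies around the punctures are invariant under diffeomorphisms of $X$) and the property of being of geometric origin (since the pullback of the relevant family of varieties along $f$ witnesses geometric origin of $f^*\mathbb{V}$). Hence $\mathrm{MCG}(X)$ acts on the countable set $\Sigma$ of isomorphism classes of rank $2$ geometric local systems satisfying $(\star)$. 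Using the classification, each $\mathbb{V}\in\Sigma$ should be determined by a finite collection of discrete invariants (such as an isogeny class together with a torsion datum) permuted by the MCG action through a finite quotient, from which MCG-finiteness of each individual $\mathbb{V}$ will follow.

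The main obstacle is establishing the classification in a form strong enough to control the MCG action, i.e.\ the converse that \emph{every} rank $2$ geometric local system satisfying $(\star)$ arises as $\mathrm{MC}_\chi(\mathbb{W})$ for input data lying in a well-behaved parameter space. Producing examples via middle convolution is essentially a routine application of Katz's calculus, but pinning down all such local systems requires genuinely using the geometric origin hypothesis, presumably through constraints imposed on the underlying parabolic Higgs bundle or Hodge filtration. Without such a converse one cannot rule out infinite MCG orbits, since $\mathrm{MCG}(X)$ is known (by the classical dynamics of the mapping class group of the four-punctured sphere, in the spirit of Markov triples and Cantat-Loray) to act with Zariski-dense orbits on generic points of the relative character variety; thus the geometric origin hypothesis must cut out a sufficiently sparse locus on which MCG orbits become finite.
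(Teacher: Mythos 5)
Your overall strategy coincides with the paper's (middle convolution preserves geometric origin, Katz's local-monodromy calculus targets $(\star)$, and the real work is the converse classification using the Hodge/Higgs structure), but two of your concrete steps would not go through as written. First, the input and the varying data are misidentified. In the paper the convolving character is \emph{fixed} to be the quadratic one ($MC_{-1}$), the curve $E$ is the double cover of $\mathbb{P}^1$ branched at $\{0,1,\lambda,\infty\}$ and hence cannot ``move'' once $\lambda$ is fixed, and what varies is the rank one \emph{torsion} local system $\mathbb{L}$ on $E$: the local systems are $MC_{-1}(f,\mathbb{L})=MC_{-1}(f_*\mathbb{L})$, and it is precisely the finite monodromy of $f_*\mathbb{L}|_X$ that makes geometric origin (and later MCG-finiteness) of the input evident. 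If instead you let $\chi$ range over roots of unity of order $>2$, the local monodromy of $MC_\chi$ at $\infty$ acquires eigenvalues other than $-1$ (and unipotency at $0,1,\lambda$ is also lost unless the input eigenvalues are matched), so the outputs leave the $(\star)$ locus; and the Legendre Gauss--Manin local system is not the relevant input. Pairwise non-isomorphy also needs an argument: the paper gets it from \autoref{lemma:higgsvanish} together with \autoref{prop:up-to-iso} (the zero $w(\mathbb{V})$ of the Higgs field determines $\mathbb{V}$, by stability and Simpson's correspondence); ``distinguishable by Frobenius traces'' is not yet a proof.

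Second, the MCG-finiteness step has a genuine gap even if one grants the classification. The set $\Sigma$ of geometric $(\star)$-local systems is infinite (one class for each torsion point of $E$), so knowing that the mapping class group acts on $\Sigma$ does not preclude infinite orbits; your assertion that the discrete invariants are ``permuted through a finite quotient'' is exactly the point that must be proved. The paper closes this with one observation absent from your proposal: middle convolution is equivariant for the mapping class group action, and a local system with finite monodromy is evidently MCG-finite (there are only finitely many conjugacy classes of homomorphisms from the finitely generated $\pi_1(X)$ with the given finite image), so each $MC_{-1}(f_*\mathbb{L})$ is MCG-finite. Alternatively one could complete your route by exhibiting an MCG-invariant that bounds the orbit, e.g.\ the trace field $\mathbb{Q}(\zeta_m+\zeta_m^{-1})$, which pins down the order $m$ of the torsion point up to finitely many possibilities; but some such ingredient must be supplied. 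Note also that the converse classification you defer (that every geometric $(\star)$-local system arises this way, \autoref{prop:zuoshort}) is proved in the paper by combining the involutivity of $MC_{-1}$ on the relevant class of local systems with the fact that a rank one local system of geometric origin is torsion (\autoref{lemma:finite-order}); your appeal to constraints on the parabolic Higgs bundle is the right instinct, but these are the concrete mechanisms that make both halves of the theorem follow.
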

\begin{remark}
\autoref{thm:main1} answers \autoref{question} in the positive. It would be very interesting to further investigate \autoref{question} in the case of $X$ proper and to understand whether there is a qualitative difference between the two cases.
\end{remark}
This result was claimed independently, though inexplicitly, by Yang-Zuo \cite{zuolong}; they rephrase MCG-finiteness in terms of algebraic solutions to the Painlev\'e VI equation. In that language, these solutions were, we believe, discovered originally by Hitchin \cite{hitchin1995poncelet}; Lysovyy-Tykhyy \cite{lisovtyk}, in their classification of algebraic solutions to Painlev\'e VI, refer to them as the ``Cayley solutions". The reason for this name is that, under  condition $(\star)$, the  equation of the relative character variety is the \emph{Cayley cubic}: see \cite[Equation (61)]{lisovtyk}.

In fact, we classify all rank two local systems of geometric origin and satisfying $(\star)$. The following is the explicit form of our classification:
\begin{theorem}\label{thm:explicit-form}
Let $f: E\to \mathbb{P}^1$ be the $2:1$ cover branched over $\{0,1,\infty,\lambda\}$. Consider the fiber square 
$$\xymatrix{
Z'_m\ar[d] \ar[r] & W' \ar[d] \\
E \times \mathbb{P}^1  \ar[r]^{[m]\times \on{id}} & E \times \mathbb{P}^1 
}$$	
where $W'$ is the double cover of $E\times \mathbb{P}^1$ branched along the graph of $f$ and $E\times \{\infty\}$.
For any rank two local system  $\mathbb{V}$ of geometric origin on $X$ satisfying $(\star)$, $\mathbb{V}$ appears as a subquotient of the cohomology of the family $$Z'_m\to E\times \mathbb{P}^1\to \mathbb{P}^1$$ for some $m$.
\end{theorem}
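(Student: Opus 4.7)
The plan is to use Katz's middle convolution machinery to reduce the classification of rank two geometric local systems satisfying $(\star)$ to the classification of rank-one local systems of finite order on the elliptic curve $E$, and then to identify the family $Z'_m \to \mathbb{P}^1$ as the explicit geometric realization of the inverse procedure.

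First, I would apply Katz's algorithm --- a sequence of middle convolutions $\on{MC}_{-1}$ with respect to the order-two Kummer sheaf, combined with rank-one twists --- to any rank two local system $\mathbb{V}$ on $X$ satisfying $(\star)$. Thanks to the numerical data encoded in $(\star)$ (unipotent Jordan blocks at $0,1,\lambda$ and a Jordan block with eigenvalue $-1$ at $\infty$), this procedure terminates at a rank-one local system $\mathbb{L}$, which is naturally viewed on the double cover $E$ (or more precisely on an open subset obtained by removing the preimages of $\{0,1,\lambda,\infty\}$). Because middle convolution and rank-one twisting are invertible, $\mathbb{V}$ is recovered from $\mathbb{L}$.

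Second, since middle convolution preserves the property of being of geometric origin, $\mathbb{L}$ is itself a geometric rank-one local system. By the classical fact that any rank-one local system of geometric origin underlies a $\mathbb{Q}$-variation of Hodge structure of rank one and weight zero, and hence has finite-order monodromy, $\mathbb{L}$ is a torsion character; choose $m \geq 1$ so that $[m]^*\mathbb{L}$ is trivial on $E$. Third, I would verify that the family $Z'_m \to \mathbb{P}^1$ geometrically realizes the inverse middle convolution applied to the trivialized character. The double cover $W' \to E \times \mathbb{P}^1$ branched along $\Gamma_f \cup (E \times \{\infty\})$ is precisely the geometric incarnation of the Kummer sheaf appearing in $\on{MC}_{-1}$: the branching along $\Gamma_f$ encodes the Kummer character attached to $f$, while the branching along $E \times \{\infty\}$ accounts for the twist associated to the $-J(2)$ monodromy at infinity. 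The pullback by $[m] \times \id$ trivializes $\mathbb{L}$ on the first factor, and the derived pushforward $R^1 \pi_* \mathbb{C}$ of the composed projection $\pi \colon Z'_m \to \mathbb{P}^1$ then contains $\mathbb{V}$ as a subquotient.

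The main obstacle is making the geometric identification in step three precise: one must match Katz's abstract definition of middle convolution (as a composition of pullback, tensor with a Kummer sheaf, middle extension, and derived pushforward on a product) with the concrete fiber-product construction of $Z'_m$ over $E \times \mathbb{P}^1$, and verify that the branch data $\Gamma_f + E \times \{\infty\}$ together with the isogeny $[m]$ produce exactly the inverse middle convolution of the torsion character $\mathbb{L}$, with the correct multiplicities and compatible with the middle extension structure at the punctures.
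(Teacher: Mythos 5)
Your overall architecture is the paper's: reduce $\mathbb{V}$ to a torsion rank-one character $\mathbb{L}$ on $E$ via $MC_{-1}$, then observe that $Z'_m$ (the $[m]$-cover fibered against the double cover $W'$) trivializes $\pi_1^*\mathbb{L}\otimes m^*\chi$, so that $R^1(\pi_2)_*$ of its middle extension appears in the cohomology of $Z'_m\to\mathbb{P}^1$. Your second and third steps are essentially the paper's proof verbatim (geometric origin of $\mathbb{L}$ plus finiteness of monodromy for geometric rank-one systems, as in \autoref{lemma:finite-order}, and the trivialization of the Kummer datum on $Z'_m$). The gap is in your first step. The local systems satisfying $(\star)$ are \emph{not} rigid (the index of rigidity is $0$; the relative character variety is the Cayley cubic surface), so Katz's algorithm does not ``terminate at a rank-one local system'': a single application of $MC_{-1}$ to $\mathbb{V}$ produces a rank-\emph{two} local system on $X$, with semisimple local monodromies of eigenvalues $\{1,-1\}$ at all four punctures. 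What you actually need, and what is missing from your sketch, is the argument that this rank-two output is induced from a character of $E$: its pullback to $E^\circ$ has trivial local monodromy at $E[2]$ (the squares of the local monodromies are trivial), hence extends to $E$; since $MC_{-1}$ preserves irreducibility (Katz, Theorem 2.9.8(2)) and $\pi_1(E)$ is abelian, the extension is semisimple and decomposes as $\mathbb{L}\oplus[-1]^*\mathbb{L}$, giving $MC_{-1}(\mathbb{V})\simeq f_*\mathbb{L}|_X$; one then invokes $MC_{-1}\circ MC_{-1}\simeq\mathrm{id}$ (Katz 2.9.8(1), which requires the irreducibility of $\mathbb{V}$ -- itself a small argument from $(\star)$ via the determinant of the product of the local monodromies) to recover $\mathbb{V}=MC_{-1}(f,\mathbb{L})$. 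Without this, the claim that ``$\mathbb{V}$ is recovered from a rank-one $\mathbb{L}$'' is unsupported; with it, your route is sound.

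It is worth noting that the paper fills this step by a different mechanism: for $\mathbb{V}$ of geometric origin one has a $\mathbb{C}$-VHS, \autoref{prop:star-decomposition} and \autoref{prop:up-to-iso} show the zero $w(\mathbb{V})$ of the Higgs field determines $\mathbb{V}$ up to isomorphism, and \autoref{lemma:higgsvanish} shows the convolutions $MC_{-1}(f,\mathbb{L}_q)$ realize every possible value of $w$; this identifies every such $\mathbb{V}$ as a convolution without ever convolving $\mathbb{V}$ itself. Your forward-convolution route, once the induced-from-$E$ argument above is supplied, is a legitimate alternative that avoids the Higgs-bundle analysis for this particular theorem (though the paper needs that analysis anyway for the Sun--Yang--Zuo conjectures). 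One more small inaccuracy: the branching of $W'$ along $E\times\{\infty\}$ is forced because $m(y,x)=f(y)-x$ has a simple pole there (and an even-order pole along $\{e\}\times\mathbb{P}^1$, where there is no branching), rather than being an ad hoc ``twist for the $-J(2)$ monodromy''; stating it this way is what makes the identification of $W'$ with the Kummer sheaf $m^*\chi$ precise.
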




\begin{example}
We write down explicitly all  representations corresponding to rank two motivic local systems with the above specified local monodromies. Our formulas are obtained by plugging the values of the Cayley solutions \cite[p.145]{lisovtyk} into \cite[Equation (24)]{boalchklein}\footnote{in the following, $\alpha, \beta$ correspond $r_Y, r_Z$, respectively, of \cite{lisovtyk}}. Since the fundamental group $\pi_1(X)$ is the free group generated by loops around $0,1,\lambda$, it suffices to write down three matrices $M_0, M_1, M_{\lambda}$, corresponding to the images of the three generators:
\[
M_{0}= \begin{pmatrix} 1+x_2x_3/x_1 & -x_2^2/x_1 \\  x_3^2/x_1 & 1-x_2x_3/x_1 \end{pmatrix},
M_1=\begin{pmatrix} 1 & -x_1 \\ 0 & 1\end{pmatrix}, \ 
M_{\lambda}=\begin{pmatrix} 1 & 0 \\ x_1 & 1\end{pmatrix}, \ 
\]
where 
\[
x_1=2\cos\bigg(\frac{\pi(\alpha+\beta)}{2}\bigg), \ x_2=2  \sin\bigg(\frac{\pi \alpha}{2}\bigg), \ x_3= 2 \sin\bigg(\frac{\pi \beta}{2}\bigg)
\]
for $\alpha, \beta \in \mb{Q}$. 
\end{example}

The idea of the proofs of \autoref{thm:main1} and \autoref{thm:explicit-form} is to analyze local systems satisfying $(\star)$ via Katz's \emph{middle convolution} operation \cite{katz1996rigid}. The key is to observe that local systems of geometric origin satisfying $(\star)$ may be obtained via middle convolution from certain finite monodromy local systems on $X$. Namely, if $f: E\to \mathbb{P}^1$ is the double cover of $\mathbb{P}^1$ branched at $\{0,1,\infty,\lambda\}$, then any motivic local system satisfying $(\star)$ arises via middle convolution from $f_*\mathbb{L}|_X$, where $\mathbb{L}$ is a rank one local system on $E$ with finite monodromy; moreover this construction produces all local systems of geometric origin satisfying $(\star)$. Moreover, since $f_*\mathbb{L}|_X$ itself is MCG-finite, the same is true of its middle convolution.

\begin{remark}[Comparison with the work of Yang-Zuo]\begin{enumerate}
\item As mentioned above, our main tool is Katz's middle convolution functor. Our approach gives new proofs of all the main results of \cite{zuolong}. Indeed, \cite[Theorem 1.7]{zuolong} is a corollary of \autoref{thm:explicit-form}, and \cite[Theorem 1.5]{zuolong} follows immediately from \autoref{lemma:higgsvanish}.

\item
An important invariant of a local system of geometric origin is the \emph{trace field}, i.e. the smallest field generated by the traces of elements of $\pi_1(X)$. From the description as  middle convolution of rank one torsion local systems on $E$, it is straightforward to see  that the trace fields of our local systems are $\mb{Q}(\zeta_m+\zeta_m^{-1})$ for $\zeta_m$ a primitive $m$-th root of unity, with $m$ being the order of the rank one local system. It does not seem possible to derive this information with the techniques of \cite{zuolong}; Zuo has informed us that he conjectured these trace fields in 2018; see also \cite[Appendix A]{zuolong}, where this conjecture is alluded to.

This also explains a technical point of loc.cit.: indeed, there, the authors assume that $E$ has supersingular reduction at $p$, and use crucially the fact that the trace fields of all irreducible $\SL_2$-local systems on $X$ (satisfying $(\star)$) are unramified at $p$. The latter is certainly not true if $E$ has ordinary reduction, in which case there will be rank two local systems with trace field $\mb{Q}(\zeta_{p^k}+\zeta_{p^k}^{-1})$ for any $k\geq 1$, which ramifies at $p$; correspondingly, there are pre-periodic Higgs bundles which are not periodic, in contrast with the case when $E$ has supersingular reduction (see \cite[Corollary 3.4.1]{zuolong}).
\end{enumerate}
\end{remark}

\subsection{Higgs bundles and Higgs-de Rham flow}
As before, let $D=\{0,1,\infty,\lambda\}\subset \mathbb{P}^1$, $X=\mathbb{P}^1\setminus D$, and let $\mathbb{V}$ be a local system of geometric origin on $X$ of rank $2$, satisfying $(\star)$. By e.g.~\cite[Proposition 4.1.4]{landesman2022geometric}, $(\mathbb{V}\otimes \mathscr{O}_X, \on{id}\otimes d)$ canonically extends to a filtered flat vector bundle on $\mathbb{P}^1$ with logarithmic flat connection, $$(F^1, \mathscr E, \nabla: \mathscr E\to \mathscr E\otimes \Omega^1_{\mathbb{P}^1}(\log D)),$$ where $F$ restricts to the Hodge filtration on $X$, and moreover where the residues of $\nabla$ have real parts lying in $[0,1)$.

One can show (see \autoref{prop:star-decomposition}) that if $\mathbb{V}$ satisfies $(\star)$, then $F^1\simeq \mathscr{O}_{\mathbb{P}^1}$ and $\mathscr E/F^1\simeq \mathscr{O}_{\mathbb{P}^1}(-1)$. The connection $\nabla$ yields a natural map $$\theta: F^1\to \mathscr E/F^1\otimes \Omega^1_{\mathbb{P}^1}(\log D),$$ which, being a non-zero map $\mathscr{O}_{\mathbb{P}^1}\to \mathscr{O}_{\mathbb{P}^1}(1)$, vanishes at a unique point $w(\mathbb{V})\in \mathbb{P}^1$. Moreover $w(\mathbb{V})$ determines $\mathbb{V}$ up to isomorphism (\autoref{prop:up-to-iso}) . 

The following is a slight strengthening of a conjecture of Sun-Yang-Zuo: 
\begin{conjecture}[{Slight strengthening of \cite[Conjecture 4.10]{syz}}]\label{conj:torsion}
	Let $f:E\to \mathbb{P}^1$ be the double cover branched over $D$, viewed as an elliptic curve with the identity the point over $\infty$. As $\{\mathbb{V}\}$ ranges over all local systems of geometric origin satisfying $(\star)$, the $w(\mathbb{V})$ are precisely the image under $f$ of the torsion points of $E$.
\end{conjecture}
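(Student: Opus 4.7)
The plan is to use Theorem~\ref{thm:explicit-form} to reduce the conjecture to an explicit identification of $w(\mathbb{V})$ in terms of middle-convolution data. By that theorem and the discussion following it, every local system $\mathbb{V}$ of geometric origin on $X$ satisfying $(\star)$ arises via Katz's middle convolution from $f_*\mathbb{L}|_X$ for some rank one torsion local system $\mathbb{L}$ on $E$. Such $\mathbb{L}$ are classified by $\mathrm{Pic}^0(E)_{\mathrm{tors}}\simeq E_{\mathrm{tors}}$, and the involution $\mathbb{L}\mapsto \mathbb{L}^{-1}$ (which yields an isomorphic pushforward, hence an isomorphic middle convolution, since the hyperelliptic involution acts as $-1$ on $\pi_1(E)=\mathbb{Z}^2$) corresponds to $P\mapsto -P$ on $E$, whose quotient is precisely $f:E\to \mathbb{P}^1$. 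Combined with Proposition~\ref{prop:up-to-iso}, which says that $w(\mathbb{V})$ determines $\mathbb{V}$, the conjecture reduces to showing that the composite $\mathbb{V}\mapsto \mathbb{L} \mapsto P \mapsto f(P)$ agrees with $\mathbb{V}\mapsto w(\mathbb{V})$.

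First, I would compute the parabolic Higgs bundle of $\mathbb{V}$ in terms of $P$. Since $\mathbb{L}$ is unitary of rank one on the compact curve $E$, its Higgs bundle is $(\mathcal{L},0)$, where $\mathcal{L}$ is the degree zero line bundle corresponding to $P$. Because $f$ is \'etale on $X$, the parabolic Higgs bundle of $f_*\mathbb{L}|_X$ on $(\mathbb{P}^1, D)$ is obtained by pushing forward to $(f_*\mathcal{L},0)$ and decorating it with a parabolic structure at $D$ that encodes the ramification of $f$ together with the order of $\mathbb{L}$. Applying the explicit formulas for middle convolution on parabolic Higgs bundles (as developed by Dettweiler--Sabbah) then yields the parabolic Higgs bundle $(\mathscr{E}, F^1, \theta)$ associated to $\mathbb{V}$, with $F^1 \simeq \mathscr{O}_{\mathbb{P}^1}$, $\mathscr{E}/F^1 \simeq \mathscr{O}_{\mathbb{P}^1}(-1)$, and $\theta$ a nonzero global section of $\mathscr{O}_{\mathbb{P}^1}(1)$.

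Finally, I would identify the unique zero of $\theta$ with $f(P)$. For rank two parabolic Higgs bundles on $(\mathbb{P}^1, D)$ with the residue data forced by $(\star)$, the Hitchin spectral curve is birationally $E$ itself---this is closely related to the appearance of the Cayley cubic and the Painlev\'e~VI system mentioned in the introduction. Under the Beauville--Narasimhan--Ramanan spectral correspondence, the Higgs bundle constructed above corresponds to $\mathcal{L}$ on $E$, and the zero of its Higgs field is read off as the image $f(P)$ of the supporting torsion point. The hard part will be making this last identification rigorous: one must verify that the combined operation of pushforward by $f$ and middle convolution agrees with the spectral correspondence on the stratum of the moduli of parabolic Higgs bundles corresponding to $\mathbb{C}^\times$-fixed points (i.e.~systems of Hodge bundles). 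A more computational alternative is to use the explicit monodromy matrices $M_0, M_1, M_\lambda$ from the example above, reconstruct the filtered flat connection and hence $\theta$, and verify directly that its zero lies at $f(P)$, identifying $P\in E_{\mathrm{tors}}$ with $\alpha+\beta\tau$ under $E\simeq \mathbb{C}/(\mathbb{Z}+\tau\mathbb{Z})$.
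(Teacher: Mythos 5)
Your reduction assumes exactly the hard half of the statement. You invoke \autoref{thm:explicit-form} to say that every $\mathbb{V}$ of geometric origin satisfying $(\star)$ is a middle convolution of $f_*\mathbb{L}$ with $\mathbb{L}$ \emph{torsion}; but in the paper that theorem is itself deduced from \autoref{prop:zuoshort}, i.e.\ from the proof of this very conjecture, so the appeal is circular, and independently of logical ordering your proposal contains no argument ruling out a geometric-origin $\mathbb{V}$ with $w(\mathbb{V})$ the image of a \emph{non-torsion} point. The paper's route to that direction is: for \emph{every} $q\in E$ (torsion or not), the unitary rank-one $\mathbb{L}_q$ with Higgs bundle $(\mathscr{O}(q-e),0)$ yields $MC_{-1}(f,\mathbb{L}_q)$ satisfying $(\star)$, underlying a $\mathbb{C}$-VHS, and with $w=f(q)$ (\autoref{lemma:higgsvanish}); by \autoref{prop:up-to-iso} these exhaust all VHS's satisfying $(\star)$; then Katz's invertibility of $MC_{-1}$ gives $f_*\mathbb{L}_q=MC_{-1}(\mathbb{V})$, middle convolution preserves geometric origin, and \autoref{lemma:finite-order} (a rank-one local system is of geometric origin iff torsion) forces $q$ to be torsion. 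None of these steps (invertibility of $MC_{-1}$, preservation of geometric origin, the rank-one torsion lemma) appear in your proposal, and without them you only obtain the easier inclusion that torsion points produce geometric-origin local systems.

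The remaining ingredient, the identification $w\bigl(MC_{-1}(f,\mathbb{L}_P)\bigr)=f(P)$, is precisely \autoref{lemma:higgsvanish}, and you correctly flag it as the hard part, but the route you sketch is problematic: the Higgs bundles in question are graded (systems of Hodge bundles), so the Higgs field is nilpotent, the Hitchin image is zero, and the spectral curve is a non-reduced thickening of the zero section rather than (birationally) $E$; the Beauville--Narasimhan--Ramanan correspondence therefore does not let you ``read off'' $f(P)$, and the relation to $E$ enters through the convolution kernel and the double cover, not through spectral data. The paper instead computes the Higgs field of the convolution fiberwise (via the degeneration formula of \cite[Theorem 5.1.6]{llcanonical}) as a multiplication map of sections of line bundles on $E$, and shows by a commutative-square argument that it vanishes exactly at $f(q)$ and nowhere else; note that ``vanishes \emph{only} there'' is also needed and is not addressed by your sketch. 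Your observation that $\mathbb{L}\mapsto\mathbb{L}^{\vee}$ matches $P\mapsto -P$, so that $w$ depends only on $f(P)$, is correct and consistent with the paper, and your computational fallback (reconstructing $\theta$ from the explicit matrices $M_0,M_1,M_\lambda$) could in principle work, but as written neither it nor the spectral argument is carried out.
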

We give a short proof of this conjecture in \autoref{prop:zuoshort}, and explain how to deduce \cite[Conjecture 4.10]{syz} in \autoref{section:syz-deduction}.

The proof has two parts. The first is to show that those $\mathbb{V}$ with $w(\mathbb{V})=f(x)$, for $x\in E$ torsion, are of geometric origin; this will follow from a more precise form of \autoref{thm:explicit-form}. 

The second is to show that these are are the only points of geometric origin. We give two approaches. The first is to again use the middle convolution, which preserves the property of geometric origin and sends $\mathbb{V}$ with $w(\mathbb{V})$ not the image of a torsion point to a local system manifestly not of geometric origin. 

The second approach relies on Sun-Yang-Zuo's theory of the Higgs-de Rham flow, as we now explain; as part of it we give a simple proof of \cite[Conjecture 4.8]{syz}. One may view $F^1\oplus \mathscr E/F^1$, with the Higgs field $$\begin{pmatrix} 0 & \theta \\ 0 & 0 \end{pmatrix}$$ as a graded nilpotent Higgs bundle. Sun-Yang-Zuo, using Ogus-Vologodsky's non-abelian Hodge theory in positive characteristic \cite{ogusvologodsky}, produce an endomorphism of the moduli of nilpotent parabolic Higgs bundles $\psi_p$ which they call the Higgs-de Rham flow. They show that for Higgs bundles associated to motivic local systems, almost all of their reductions mod $p$ are $f$-\emph{periodic} under the Higgs-de Rham flow for some fixed $f$ independent of $p$; the converse is expected to hold (see e.g. \cite{rajuperiodic} or \cite[Conjecture 1.7]{shengtorsion}). The moduli of stable nilpotent graded parabolic Higgs bundles on $(\mathbb{P}^1, D)$ whose underlying graded vector bundle is $\mathscr{O}_{\mathbb{P}^1}\oplus \mathscr{O}_{\mathbb{P}^1}(-1)$ is isomorphic to $\mathbb{P}^1$ via the map sending a Higgs bundle to the vanishing locus of its Higgs field. 

Sun-Yang-Zuo conjecture \cite[Conjecture 4.8]{syz} and Lin-Sheng-Wang prove \cite{shengtorsion}:

\begin{theorem}[{\cite[Conjecture 4.8]{syz}}, {\cite[Theorem 1.6]{shengtorsion}}]\label{thm:flow}
 Viewing $\mathbb{P}^1$ as the moduli of graded nilpotent Higgs bundles as above, the map $\psi_p: \mathbb{P}^1\to \mathbb{P}^1$ fits in the following commutative diagram:
 $$\xymatrix{
 E \ar[r]^{[p]} \ar[d]^f & E \ar[d]^f \\
 \mathbb{P}^1 \ar[r]^{\psi_p} & \mathbb{P}^1
 }$$
\end{theorem}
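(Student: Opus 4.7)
The plan is to show that $\psi_p \circ f$ and $f \circ [p]$ agree on a Zariski dense subset of $E$, and then invoke the algebraicity of $\psi_p$ as an endomorphism of $\mathbb{P}^1$ to deduce that the diagram commutes on all of $E$.

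The first step is to exploit the explicit classification of \autoref{thm:explicit-form}: every rank two local system $\mathbb{V}$ of geometric origin on $X$ satisfying $(\star)$ arises, via Katz's middle convolution, from $f_*\mathbb{L}|_X$ for a torsion rank one local system $\mathbb{L}$ on $E$. I would track the middle convolution construction through the Hodge filtration to identify the vanishing point $w(\mathbb{V}) \in \mathbb{P}^1$ of the Higgs field of the associated graded parabolic Higgs bundle with $f(x_\mathbb{L})$, where $x_\mathbb{L} \in E$ is the torsion point corresponding to $\mathbb{L}$ under the Abel--Jacobi identification $\on{Pic}^0(E) \simeq E$ (with $\infty$ as the origin). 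This is the more precise form of \autoref{thm:explicit-form} needed for \autoref{conj:torsion}, and is the content of \autoref{prop:zuoshort} alluded to in the introduction.

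The second step is to use functoriality of Ogus--Vologodsky non-abelian Hodge theory in characteristic $p$: for Higgs bundles coming from the mod-$p$ reduction of a motivic local system, $\psi_p$ is identified (up to the passage to associated graded) with Frobenius pullback on the corresponding filtered flat bundle. Middle convolution is a purely algebraic operation compatible with Frobenius pullback, so the action of $\psi_p$ on the Higgs bundles produced in Step~1 is determined by the action of Frobenius pullback on the rank one local systems on $E$. But Frobenius pullback sends $\mathbb{L} \mapsto \mathbb{L}^{\otimes p}$, and under $\on{Pic}^0(E) \simeq E$ this is exactly multiplication by $p$ on prime-to-$p$ torsion. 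Combining with Step~1, for every $x \in E$ of order coprime to $p$ we obtain $\psi_p(f(x)) = f([p]x)$. Since such points are Zariski dense in $E$ and both maps are algebraic, the diagram commutes everywhere.

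The main obstacle is Step~1: it requires a careful computation of how middle convolution transforms the parabolic structure, the Hodge filtration, and the Higgs field, so as to pin down $w(\mathbb{V})$ explicitly as $f(x_\mathbb{L})$. One route is direct calculation using the explicit family $Z'_m \to \mathbb{P}^1$ from \autoref{thm:explicit-form} and a Leray/Koszul analysis of its relative cohomology; another is to establish a general functoriality statement for middle convolution on the parabolic Higgs side. A secondary, more technical issue is ensuring that the Ogus--Vologodsky-based formalism of Sun--Yang--Zuo is compatible with the classical non-abelian Hodge theory picture at the level of precision needed to transport Frobenius pullback through middle convolution; this should be essentially automatic from the compatibility of the two formalisms for filtered flat bundles with nilpotent $p$-curvature, but the comparison should be spelled out.
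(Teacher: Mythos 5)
Your plan is essentially the paper's proof: your Step 1 is realized by \autoref{lemma:higgsvanish} together with \autoref{prop:higgszero2} (which transfers the characteristic-zero computation of the zero of the Higgs field to characteristic $p$ by lifting to $W$ and specializing), and your Step 2 is exactly \autoref{prop:periodic}, where Ogus--Vologodsky's theorem applied to the $W_2$-liftable families $Z_X, W_X$ shows that $\Gr\circ C^{-1}$ fixes the total Higgs bundle $(\mc{E},\theta)|_X$ while its $F_k$-semilinearity sends the $\chi$-isotypic piece $MC_{-1}(\chi)$ to $MC_{-1}(\chi^p)$, i.e., acts by $[p]$ on the corresponding prime-to-$p$ torsion points of $E$. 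The concluding step, agreement on a Zariski-dense set of points plus algebraicity of $\psi_p$ (the paper also notes both maps have degree $p^2$), is the same.
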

We give a more formal statement in \autoref{section:recollections}.

This immediately implies that any point whose reduction is $f$-periodic under almost all $\psi_p$ is torsion. We will conclude the paper by giving a simple new proof of \autoref{thm:flow} and a proof of \autoref{conj:torsion}, also via the middle convolution. Briefly, we relate the moduli space of Higgs bundles in question (which is isomorphic to $\mathbb{P}^1$) to the moduli space of rank one Higgs bundles with zero Higgs field on $E$, and observe that the middle convolution commutes with the Higgs-de Rham flow. But the Higgs-de Rham flow for Higgs bundles of rank one with zero Higgs field on  $E$ is simply $[p]$.
\subsection{Acknowledgments} We are grateful for useful conversations with Bruno Klingler, Raju Krishnamoorthy, Mao Sheng, and Kang Zuo. Lam was supported by a Dirichlet Postdoctoral Fellowship at Humboldt University. Litt was supported by the NSERC Discovery Grant, “Anabelian methods in arithmetic and algebraic geometry.”

\section{Preliminaries on convolution in the Betti and $\ell$-adic settings}\label{section:MC-prelims}
Let $k$ be a field and let $D\subset \mathbb{P}^1_k$ be a reduced effective divisor containing $\infty$. Let $X=\mathbb{P}^1\setminus D$. 
There are natural projection maps $$\pi_i: X\times X\setminus \Delta \to X,$$ for $i=1,2$ as well as a map $$m: X\times X\setminus \Delta\to \mathbb{G}_m$$ given by
 $$m: (x, y)\mapsto x-y.$$ 
 Let $$j: X\times X\setminus \Delta\hookrightarrow \mathbb{P}^1_k\times X$$ be the evident inclusion.
\begin{definition}
	Let $\chi$ be a rank one local system on $\mathbb{G}_m$, and let $\mathbb{V}$ be a local system on $X$. The \emph{middle convolution} $MC_\chi(\mathbb{V})$ is defined to be $$MC_\chi(\mathbb{V})=R^1(\pi_2)_*j_*(\pi_1^*\mathbb{V}\otimes m^*\chi).$$ 
\end{definition}
\begin{remark}
	In the $\ell$-adic or Betti setting, the middle convolution has a nice geometric interpretation. If $\mathbb{V}$ is \emph{pure of weight zero} and $\chi$ has finite order, then $MC_\chi(\mathbb{V})$ is precisely the weight one part of $R^1(\pi_2)_*(\pi_1^*\mathbb{V}\otimes m^*\chi).$
\end{remark}
 We will also make use of the following variant of middle convolution. Let $Y$ be a smooth proper curve, and let $f: Y\to \mathbb{P}^1$ be a finite, generically \'etale map, with branch divisor $D$. Let $I=f^{-1}(\infty)$. Let $\Gamma$ be the graph of $f$. Then again there are natural projections $$\pi_1: Y\times X\setminus (\Gamma\cup I\times X)\to Y,  \pi_2: Y\times X\setminus (\Gamma\cup I\times X)\to X$$ and a natural map $$m: Y\times X\setminus (\Gamma\cup I\times X)\to \mathbb{G}_m$$ $$(y,x)\mapsto f(y)-x.$$ Let $$i: Y\times X\setminus (\Gamma\cup I\times X)\hookrightarrow Y\times X$$ be the evident inclusion.
 \begin{definition}\label{defn:mcf}
	Let $\chi$ be a rank one local system on $\mathbb{G}_m$, and let $\mathbb{V}$ be a local system on $Y$. The \emph{middle convolution} $MC_\chi(f,\mathbb{V})$ is defined to be $$MC_\chi(f,\mathbb{V})=R^1(\pi_2)_*i_*(\pi_1^*\mathbb{V}\otimes m^*\chi).$$  
\end{definition}
It follows from e.g.~proper base change that:
\begin{proposition}
	There is a natural isomorphism $MC_\chi(f, \mathbb{V})\simeq MC_\chi(f_*\mathbb{V})$.
\end{proposition}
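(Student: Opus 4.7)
The plan is to exploit the evident compatibility of the two constructions via proper base change and the projection formula. Consider the finite morphism $\tilde f := f \times \on{id}_X : Y \times X \to \mathbb{P}^1 \times X$. We have $\pi_2 \circ \tilde f = \pi_2$ as maps to $X$, and $m \circ \tilde f = m$, since both send $(y,x)$ to $f(y) - x$. Over $U_X := X \times X \setminus \Delta$, the map $\tilde f$ restricts to a finite \'etale morphism $\tilde f^o : \tilde f^{-1}(U_X) \to U_X$, where $\tilde f^{-1}(U_X) = f^{-1}(X) \times X \setminus \Gamma$; note that this is contained in the open $U_Y := Y \times X \setminus (\Gamma \cup I \times X)$ appearing in \autoref{defn:mcf}.

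First, proper base change for the finite \'etale map $f : f^{-1}(X) \to X$ yields $\pi_1^*(f_*\mathbb{V}) \simeq \tilde f^o_* \pi_1^*\mathbb{V}$ on $U_X$. Combined with the projection formula and $m \circ \tilde f^o = m$, this gives
\[
\pi_1^*(f_*\mathbb{V}) \otimes m^*\chi \;\simeq\; \tilde f^o_*\bigl(\pi_1^*\mathbb{V} \otimes m^*\chi\bigr)
\]
as local systems on $U_X$.

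Next, push this identification forward via $j_*$ to $\mathbb{P}^1 \times X$. Using $j \circ \tilde f^o = \tilde f \circ i'$ where $i' : \tilde f^{-1}(U_X) \hookrightarrow Y \times X$ is the open inclusion, we get $j_* \tilde f^o_* = \tilde f_* i'_*$. The main step is then to show
\[
i'_*\bigl(\pi_1^*\mathbb{V} \otimes m^*\chi|_{\tilde f^{-1}(U_X)}\bigr) \;=\; i_*\bigl(\pi_1^*\mathbb{V} \otimes m^*\chi\bigr)
\]
on $Y \times X$, with $i$ as in \autoref{defn:mcf}. Factoring $i' = i \circ k$ through the open immersion $k : \tilde f^{-1}(U_X) \hookrightarrow U_Y$, whose complement is the divisor $f^{-1}(D \setminus \{\infty\}) \times X$, this reduces to verifying that $\pi_1^*\mathbb{V} \otimes m^*\chi$ has trivial monodromy around each component $\{y\} \times X$ for $y \in f^{-1}(D \setminus \{\infty\})$. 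This is the main technical content, and it holds because $\mathbb{V}$ is a local system on the smooth proper curve $Y$, so $\pi_1^*\mathbb{V}$ has trivial monodromy around any $\{y\} \times X$; and a small loop around $\{y\} \times X$ maps under $m$ to a loop in $\mathbb{G}_m$ wrapping around the nonzero value $f(y) - x$, hence is null-homotopic in $\mathbb{G}_m$ and contributes no monodromy from $\chi$.

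Finally, applying $R^1(\pi_2)_*$ to both sides, and using that $\tilde f$ is finite (so $R \tilde f_* = \tilde f_*$) together with $\pi_2 \circ \tilde f = \pi_2$, we conclude $MC_\chi(f, \mathbb{V}) \simeq MC_\chi(f_*\mathbb{V})$.
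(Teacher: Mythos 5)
Your argument is correct, and it is essentially a fully detailed version of the paper's own justification, which simply invokes proper base change for the finite map: the base-change identification $\pi_1^*(f_*\mathbb{V})\simeq \tilde f^o_*\pi_1^*\mathbb{V}$, the projection formula, the observation that $\pi_1^*\mathbb{V}\otimes m^*\chi$ extends as a local system across $f^{-1}(D\setminus\{\infty\})\times X$ (so the two extensions by pushforward agree), and exactness of $\tilde f_*$ are precisely the intended ingredients.
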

The definitions are above are convenient for computing in the Betti and $\ell$-adic settings; in the positive characteristic de Rham and Higgs setting we will use slightly different formalisms.

\section{Computing the convolution}
Suppose $D$ has even degree  with $\deg(D)\geq 4$ (with the case $\deg(D)=2$ being uninteresting) and let $f: Y\to \mathbb{P}^1$ be the double cover branched over $D$. Let $\mathbb{L}$ be a rank one local system on $Y$. Suppose $k$ has characteristic different from $2$, and let $\chi$ be the unique non-trivial rank one local system on $\mathbb{G}_m$ with $\chi^2=\text{triv}.$ We write $MC_{-1}$ instead of $MC_{\chi}$ to emphasize this particular choice of $\chi$. 

\subsection{Betti and $\ell$-adic computations}
\begin{proposition}\label{prop:localmon}
	The rank of $MC_{-1}(f, \mathbb{L})$ is $\deg(D)-2$. For $\mb{L}$ non-trivial, the local monodromies of $MC_{-1}(f, \mathbb{L})$ are given by the following. Let $J(\alpha, \ell)$ denote the Jordan block given by a  matrix of size $\ell$ and generalized eigenvalues $\alpha$.
	\begin{itemize}
	\item At a point $P\neq \infty$, there is a Jordan block $J(1,2)$, and all other Jordan blocks are $J(1,1)$.
	\item At $P=\infty$, there is a Jordan block $J(-1,2)$, and all other Jordan blocks are $J(-1,1)$.

	\end{itemize}

\end{proposition}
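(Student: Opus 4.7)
The plan is as follows. By the preceding proposition, $MC_{-1}(f, \mathbb{L}) \simeq MC_{-1}(f_*\mathbb{L})$, so I would analyze the rank two local system $\mathbb{V} := f_*\mathbb{L}$ on $X$. At each branch point $P \in D$, a small loop around $P$ lifts in $Y$ to a path connecting the two preimages of the basepoint; hence the monodromy $M_P$ of $\mathbb{V}$ acts as a sheet-swap, and $M_P^2$ is the monodromy of $\mathbb{L}$ around the unique preimage of $P$ in $Y$. Since $\mathbb{L}$ extends to a local system on the proper curve $Y$, this equals the identity, so $M_P$ has eigenvalues $\pm 1$ and Jordan decomposition $J(1,1) \oplus J(-1,1)$ at every $P \in D$.

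I would then invoke Katz's formulas (\cite[Ch.~6]{katz1996rigid}) for how $MC_{-1}$ transforms Jordan blocks: at a finite $P$, an input block $J(\alpha,\ell)$ produces $J(-\alpha,\ell')$ with $\ell' = \ell - 1$ if $\alpha = 1$, $\ell' = \ell + 1$ if $\alpha = -1$, and $\ell' = \ell$ otherwise, and the remaining blocks are $J(1,1)$; at $\infty$ the rule is the same but with the roles of $\pm 1$ (and the corresponding shifts) swapped, and the remaining blocks are $J(-1,1)$. Applied to input Jordan form $J(1,1) \oplus J(-1,1)$: at each finite $P$ the $J(1,1)$ contributes nothing and the $J(-1,1)$ becomes $J(1,2)$; at $\infty$ the $J(-1,1)$ contributes nothing and the $J(1,1)$ becomes $J(-1,2)$. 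This exactly matches the claimed Jordan structure.

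For the rank I would use proper base change: the stalk of $MC_{-1}(f,\mathbb{L})$ at a generic $x\in X$ is $H^1(Y, i_*\mathcal{F}_x)$ where $\mathcal{F}_x$ is the restriction of $\pi_1^*\mathbb{L}\otimes m^*\chi$ to $Y\setminus(f^{-1}(x)\cup f^{-1}(\infty))$. At each point of $f^{-1}(x)$ the map $m$ has a simple zero, so $\mathcal{F}_x$ has local monodromy $-1$; at the ramification point over $\infty$ the map $m$ has a double pole, and since $\chi^2$ is trivial, $\mathcal{F}_x$ has trivial monodromy there, so $i_*$ extends $\mathcal{F}_x$ to a rank one local system on $Y\setminus f^{-1}(x)$ with non-trivial monodromy at both remaining punctures. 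Riemann-Hurwitz gives $g(Y) = \deg(D)/2 - 1$, and a standard Euler characteristic argument then yields $\dim H^1 = 2 g(Y) = \deg(D) - 2$.

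The main obstacle is the careful bookkeeping in Katz's local rule (particularly the asymmetric behavior at $\infty$ versus at finite points) and the verification that $\mathcal{F}_x$ is unramified at the branch point over $\infty$, so that the cohomology is actually computed on the smaller open $Y\setminus f^{-1}(x)$ rather than on $Y\setminus(f^{-1}(x)\cup f^{-1}(\infty))$. Both are classical but easy to get wrong with an unfortunate choice of sign convention.
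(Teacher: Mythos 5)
Your proposal is correct and follows essentially the same route as the paper: the rank comes from restricting to a fiber over $x$ and running an Euler-characteristic (Grothendieck--Ogg--Shafarevich) count, where your $\dim H^1 = 2g(Y) = \deg(D)-2$ is the same computation as the paper's $\chi(j_*(\pi_1^*\mathbb{L}\otimes m^*\chi|_{U_x})) = 2-\deg(D)$, and the Jordan forms are obtained by feeding the local monodromy of $f_*\mathbb{L}$ --- which, as you verify, is $J(1,1)\oplus J(-1,1)$ at every point of $D$ --- into the standard transformation rules for $MC_{-1}$ (the paper cites Dettweiler--Reiter's Lemma 5.1 where you cite Katz, but the rules you state and apply, including the asymmetric shifts at $\infty$ and the triviality of $m^*\chi$ at the ramification point over $\infty$, match the paper's). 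The only point handled more explicitly in your write-up than in the paper is the computation of the local monodromy of $f_*\mathbb{L}$ itself, which the paper leaves implicit in the phrase that non-triviality of $\mathbb{L}$ ensures the hypotheses of the cited lemma.
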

\begin{proof}
Given $x\in X$, the fiber $U_x$ of $\pi_2: Y\times X\setminus (\Gamma\cup I\times X)\to X$ over $x$ is isomorphic to $Y\setminus f^{-1}(\{x\cup \infty\})$, which has Euler characteristic $1-\deg(D)$. 
The restriction $\pi_1^*\mathbb{L}\otimes m^*\chi|_{U_x}$ has non-trivial monodromy at the two points of $f^{-1}(x)$ 
and trivial monodromy at $f^{-1}(\infty)$.  
Hence the Grothendieck-Ogg-Shafarevich formula yields $$\chi(j_*(\pi_1^*\mathbb{L}\otimes m^*\chi|_{U_x}))=2-\deg(D).$$ As $\pi_1^*\mathbb{L}\otimes m^*\chi|_{U_x}$ is a non-trivial rank one local system, $H^0(\pi_1^*\mathbb{L}\otimes m^*\chi|_{U_x})=H^0(\pi_1^*\mathbb{L}\otimes m^*\chi|_{U_x})=0$. Hence $MC_{-1}(f, \mathbb{L})$ has rank equal to $\deg(D)-2$.  

For the computation of local monodromies, we refer the reader to \cite[Lemma 5.1]{drpainleve}. Indeed, $MC_{-1}(f, \mb{L})$ is equivalent to the standard middle convoluiton applied to  (the restriction to $X$ of) $f_*\mb{L}$, and the latter satisfies the conditions of \cite[Lemma 5.1]{drpainleve} by our assumptions that $\mb{L}$ is non-trivial.
\end{proof}
The upshot of this proposition is that if $D=(0)+(1)+(\lambda)+(\infty)$, so $Y=E$ is an elliptic curve, and $\mathbb{L}$ is a rank one local system on $E$, then $MC_{-1}(f, \mathbb{L})$ has rank $2$ and satisfies $(\star)$. If $\mathbb{L}$ is unitary (hence underlies a $\mathbb{C}$-VHS), then $MC_{-1}(f, \mathbb{L})$ underlies a $\mathbb{C}$-VHS as well.
\subsection{de Rham computations}
In the special case when $D=(0)+(1)+(\lambda)+(\infty)$,  $Y$ is an elliptic curve with origin $e=f^{-1}(\infty)$, and we denote it by $E$ instead. Let $(\mathscr{L}, \nabla)$ be a locally free sheaf on $E$ of rank one and degree zero with integrable connection. If $k=\mathbb{C}$, the Riemann-Hilbert correspondence gives $\mathbb{L}:=\ker(\nabla)$ a rank one local system on $E$. We may write $\mathscr{L}=\mathcal{O}(e-q)$ for some $q\in E$. In this section we compute the parabolic Higgs bundle on $(\mathbb{P}^1, D)$ obtained from $(\mathscr{L},\nabla)$ via middle convolution.

Let $x\in\mathbb{P}^1\setminus D$ be a point and $x_1, x_2$ its preimages in $E$. Consider the rank one locally free sheaf with flat connection on $(E-\{x_1\}-\{x_2\})\times \{x\}$ corresponding to the character $m^*\chi$ under the Riemann-Hilbert correspondence, and  let $(\mathscr{M}_\star, \nabla)$ denote its Deligne canonical extension to $E$, where we view $\mathscr{M}_\star$ as a parabolic bundle. The degree of $\mathscr{M}_0$ is $-1$; it is $\mathscr{O}_E(-p)$ where $2p=x_1+x_2=e$. (To see this, note that $m^*\chi$ is trivialized on the double cover of $E$ branched at $x_1, x_2$, which is $\on{Spec}_E \mathscr{O}_E\oplus \mathscr{O}_E(-p),$ where the multiplication is induced by the map $\mathscr{O}_E(-2p)\to \mathscr{O}_E$ vanishing at $x_1, x_2$.)

The following is obtained by specializing \cite[Theorem 5.1.6]{llcanonical} to our setting:  \footnote{we match up the notation with loc.cit. for the reader's convenience. Let $E^{\circ}$ denote $E-E[2]$. In the notation of loc.cit., we set  $\mscr{B}\subset E^{\circ}$ to be a contractible neighborhood of $x_1$, $\mscr{C}$  the constant family $E\times \mscr{B}$, $s_{1}: \mscr{B}=E-E[2]\rightarrow \mscr{C}$ the diagonal map  and $s_2=-s_1$, and $\mb{V}$ the unitary local system  on $\mscr{C}$ whose restriction to each fiber $\mscr{C}_y$ is $\mb{L}\otimes (m^*\chi)|_{\mscr{C}_y}$} 
\begin{proposition}

The Higgs field on $MC_{-1}(f, \mathbb{L})$ at $x$ is computed via the  map $$H^0(E, \mathscr{L}\otimes \mathscr M_0\otimes \omega_E(x_1+x_2))\to H^1(E, \mathscr{L}\otimes \mathscr M_0)\otimes T^*_xX$$ given as adjoint (via Serre duality) to the multiplication map
$$H^0(E, \mathscr{L}\otimes \mathscr M_0\otimes \omega_E(x_1+x_2))\otimes H^0(E, \mathscr{L}^\vee\otimes \mathscr{M}_0^\vee\otimes \omega_E)\to H^0(\omega_E^{\otimes 2}(x_1+x_2))\to T_x^*X,$$ 
where the map $H^0(\omega_E^{\otimes 2}(x_1+x_2))\to T_x^*X$ is the one given by pullback on cotangent spaces along the map  $E\rightarrow \mc{M}_{1,2}$ given by 
\[
x_1 \mapsto (E, x_1, -x_1).
\]

\end{proposition}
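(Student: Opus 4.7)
My plan is to deduce the statement as a direct specialization of the general formula \cite[Theorem 5.1.6]{llcanonical}, which computes the Higgs field of a middle convolution of the form $R^1(\pi_2)_*(\pi_1^* \mathbb V \otimes m^*\chi)$ in terms of multiplication of sections under Serre duality. The footnote already tells us the dictionary between our setting and that of loc.~cit.\ --- take $\mathscr C = E \times \mathscr B$ to be the constant family over a small contractible open $\mathscr B \subset E \setminus E[2]$, with $s_1$ the diagonal section $x_1 \mapsto (x_1,x_1)$ and $s_2 = -s_1$ (using the elliptic involution, so that $s_1 + s_2 = e$), and $\mathbb V$ the rank one local system on $\mathscr C$ whose restriction to the fiber over $y \in \mathscr B$ is $\mathbb L \otimes (m^*\chi)|_{\mathscr C_y}$. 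Given this dictionary, the statement of loc.~cit.\ immediately produces the multiplication-of-sections formula we want, provided we correctly identify:

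\begin{enumerate}
\item the fiber-wise Deligne extension of $\mathbb L \otimes m^*\chi|_{\mathscr C_y}$ as $\mathscr L \otimes \mathscr M_0$ with the stated parabolic structure, and
\item the cotangent space pullback map $H^0(\omega_E^{\otimes 2}(x_1 + x_2)) \to T_x^* X$.
\end{enumerate}

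For (1), I would use that $(\mathscr M_0,\nabla)$ is the Deligne canonical extension of $m^*\chi$ on each fiber, compute $\deg \mathscr M_0 = -1$ by the square-root description $\mathscr M_0 = \mathscr O_E(-p)$ where $2p = x_1 + x_2 = e$, and check that the tensor product $\mathscr L \otimes \mathscr M_0$ gives the canonical extension of the flat bundle in the integrand. For (2), the map is induced by the variation of the family $(\mathscr C_y, s_1(y), s_2(y))$ through $\mathscr B$, which tautologically factors through the map $\mathscr B \to \mathcal M_{1,2}$ sending $x_1 \mapsto (E, x_1, -x_1)$. Serre duality on $E$ identifies the space $H^0(\omega_E^{\otimes 2}(x_1 + x_2))$ as the target of the cotangent map at the marked points, and we simply pull back.

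The bulk of the work is really bookkeeping: matching up the two sides of the Serre duality pairing, verifying the multiplication map lands in $H^0(\omega_E^{\otimes 2}(x_1 + x_2))$ (which follows from degree count plus the parabolic structure: $\mathscr L \otimes \mathscr M_0 \otimes \omega_E(x_1 + x_2)$ and $\mathscr L^\vee \otimes \mathscr M_0^\vee \otimes \omega_E$ multiply to $\omega_E^{\otimes 2}(x_1 + x_2)$), and checking orientations/sign conventions. The only genuine point to be careful about is (2): ensuring that the deformation of the pair of punctures in the fiber $E \setminus \{x_1,x_2\}$, as $x$ moves in $X$, really lifts through $f$ to the pair $(x_1,-x_1)$ moving in $E$ in the prescribed way. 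This uses crucially that $f$ is the quotient $E \to E/\langle \pm 1 \rangle = \mathbb P^1$, so that a local lift of $x \in X$ is exactly given by $x_1$ with $x_2 = -x_1$. With this identified, the map $E \to \mathcal M_{1,2}$, $x_1 \mapsto (E,x_1,-x_1)$, is the promised universal family, and the formula for the Higgs field follows.
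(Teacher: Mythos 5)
Your proposal matches the paper's own treatment: the paper gives no separate argument beyond invoking \cite[Theorem 5.1.6]{llcanonical} with exactly the dictionary you describe (constant family $\mathscr{C}=E\times\mathscr{B}$, $s_1$ the diagonal, $s_2=-s_1$, $\mathbb{V}$ restricting to $\mathbb{L}\otimes m^*\chi$ on fibers), together with the identification $\mathscr{M}_0=\mathscr{O}_E(-p)$, $2p=x_1+x_2=e$, established just before the proposition. Your additional bookkeeping (degree count for the target $H^0(\omega_E^{\otimes 2}(x_1+x_2))$ and the lifting of the motion of $x$ through $f\colon E\to E/\langle\pm1\rangle$ to the pair $(x_1,-x_1)$, giving the map to $\mathcal{M}_{1,2}$) is correct and consistent with the paper.
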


\begin{lemma}\label{lemma:higgsvanish}
The Higgs bundle corresponding to $MC_{-1}(f, \mb{L})$ has Higgs field vanishing only at $f(q)$. \end{lemma}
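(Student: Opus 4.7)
The plan is to apply the formula of the preceding proposition to compute the vanishing locus of the Higgs field directly. By \autoref{prop:star-decomposition}, $F^1 \cong \mathscr{O}_{\mathbb{P}^1}$ and $\mathscr{E}/F^1 \cong \mathscr{O}_{\mathbb{P}^1}(-1)$, so the Higgs field is a section of $\mathscr{O}_{\mathbb{P}^1}(1)$ and hence has at most one zero on $\mathbb{P}^1$. It therefore suffices to show that for $x \in X$, the Higgs field vanishes at $x$ if and only if $x = f(q)$.

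First, I would pin down $\mathscr{M}_0$ on the nose. The local system $m^*\chi$ on $E \setminus \{x_1, x_2, e\}$ has monodromy $-1$ around $x_1, x_2$ (the simple zeros of $f-x$) and trivial monodromy at $e$ (the double pole of $f-x$), so it extends across $e$; thus $\mathscr{M}_0$ is a degree $-1$ square root of $\mathscr{O}_E(-x_1-x_2) \cong \mathscr{O}_E(-2e)$ in $\on{Pic}(E)$. The section $f-x \in H^0(E, \mathscr{O}_E(2e))$ realizes the double cover of $E$ branched at $\{x_1,x_2\}$ as $\on{Spec}_E(\mathscr{O}_E \oplus \mathscr{O}_E(-e))$, which identifies $\mathscr{M}_0 \cong \mathscr{O}_E(-e)$.

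Granting this, the formula becomes transparent. With $\mathscr{L} = \mathscr{O}_E(e-q)$ and $\omega_E \cong \mathscr{O}_E$, one gets
\[
\mathscr{L} \otimes \mathscr{M}_0 \otimes \omega_E(x_1+x_2) \cong \mathscr{O}_E(x_1+x_2-q), \qquad \mathscr{L}^\vee \otimes \mathscr{M}_0^\vee \otimes \omega_E \cong \mathscr{O}_E(q),
\]
both degree-one line bundles on $E$. Using $\on{Pic}^1(E) \cong E$ together with $x_1+x_2 = e$ in the group law (since $x_2 = -x_1$ under the hyperelliptic involution $[-1]$), the unique sections $\alpha$ and $\beta$ vanish (up to scalar) at $-q$ and $q$ respectively. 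Hence the product $\alpha\beta \in H^0(E, \omega_E^{\otimes 2}(x_1+x_2)) = H^0(E, \mathscr{O}_E(x_1+x_2))$ has zero divisor $(-q)+(q)$.

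Finally, I would identify the kernel of the map $H^0(E, \omega_E^{\otimes 2}(x_1+x_2)) \to T_x^*X$. This map factors through the residue maps at $x_1, x_2$, and the sections with vanishing residues at both points are precisely the global sections of $\omega_E^{\otimes 2} \cong \mathscr{O}_E$, a one-dimensional space — thus the entire kernel, by surjectivity and dimension count. This line is spanned by the ``constant'' section, whose zero divisor is exactly $x_1+x_2 = (x_1)+(-x_1)$. Therefore $\alpha\beta$ lies in the kernel iff $\{-q, q\} = \{x_1, -x_1\}$, iff $f(q) = x$. The hardest step will be the identification $\mathscr{M}_0 \cong \mathscr{O}_E(-e)$: one must check that the Deligne canonical extension picks out this particular square root of $\mathscr{O}_E(-2e)$ rather than one of its three $2$-torsion twists. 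The remainder is routine line-bundle arithmetic on $E$ and a short residue argument.
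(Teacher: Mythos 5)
Your argument is correct and is essentially the paper's: both rest on the same multiplication-of-sections formula from the preceding proposition, the same identification $\mathscr{M}_0\cong\mathscr{O}_E(-e)$, and the same key point that $H^0(\omega_E^{\otimes 2})\subset H^0(\omega_E^{\otimes 2}(x_1+x_2))$ maps to zero in $T^*_xX$ (the paper phrases this via the constancy of $E\to\mathcal{M}_{1,1}$, you via the residue description of the codifferential), the only real difference being bookkeeping by explicit divisors of the unique sections rather than the paper's commutative diagram at $x_1=q$. The one step you assert without proof --- surjectivity of $H^0(\omega_E^{\otimes 2}(x_1+x_2))\to T^*_xX$ --- is true (the map $E\setminus E[2]\to\mathcal{M}_{1,2}$ is unramified), but it is also unnecessary: your opening reduction via \autoref{prop:star-decomposition} means that exhibiting the zero at $f(q)$ already forces it to be the only one, which is exactly how the paper handles uniqueness.
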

\begin{proof}
We first show that the Higgs field vanishes at $x=f(q)$. The Higgs field factors through 
\[
H^0(\omega_E(-q+x_1+x_2)) \otimes H^0(\omega_E(q)) \rightarrow H^0(\omega_E^{\otimes 2}(x_1+x_2)).
\]
On the other hand, when $x_1=q$, we have the following commutative diagram
\begin{equation}\label{eqn:syz}
\begin{tikzcd}
H^0(\omega_E) \otimes H^0(\omega_E)  \arrow[r] \arrow[d, "\simeq"]
& H^0(\omega_E^{\otimes 2}) \arrow[d] \\
H^0(\omega_E(-q+x_1+x_2)) \otimes H^0(\omega_E(q)) \arrow[r]
& H^0(\omega_E^{\otimes 2}(x_1+x_2)).
\end{tikzcd}
\end{equation}
Indeed, the natural map $H^0(\omega_E)\rightarrow H^0(\omega(x_2))$ is an isomorphism, and the same is true of the map $H^0(\omega_E)\rightarrow H^0(\omega_E(q))$. Finally, the composition $H^0(\omega_E^{\otimes 2}) \rightarrow H^0(\omega^{\otimes 2}(x_1+x_2)) \rightarrow T^*_xX$ is zero, since this is the map induced by pullback of differentials along the composite map 
\[
E\xrightarrow{x\mapsto (E, x,-x)} \mc{M}_{1,2} \rightarrow \mc{M}_{1,1},
\]
which is constant.
\end{proof}
\section{Proofs of the main theorems in characteristic zero}
We may now give a short proof of some of our main theorems. We assume standard material about complex variations of Hodge structure; for a brief primer see \cite[\S3]{llcanonical} and \cite[\S4]{landesman2022geometric}. Before giving proofs we need some brief preliminaries.
\subsection{Preliminaries}
\begin{proposition}\label{prop:star-decomposition}
	Let $X=\mathbb{P}^1\setminus \{0,1,\infty, \lambda\}$ and let $\mathbb{V}$ be an irreducible complex local system on $X$ satisfying $(\star)$ and underlying a $\mathbb{C}$-VHS. Then the Deligne canonical extension $(\mathscr{E}, F^1, \nabla)$ of $(\mathbb{V}\otimes \mathscr{O}_X, \on{id}\otimes d)$ satisfies $\mathscr{E}\simeq \mathscr{O}_{\mathbb{P}^1}\oplus \mathscr{O}_{\mathbb{P}^1}(-1)$, with $F^1=\mathscr{O}_{\mathbb{P}^1}.$.
\end{proposition}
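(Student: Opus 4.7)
The plan is to pin down $\mathscr{E}$ and $F^1$ by combining three ingredients: a residue computation for $\deg\mathscr{E}$, parabolic polystability of the associated graded Higgs bundle, and Griffiths transversality.

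First, I would compute $\deg\mathscr{E}$ from the residues. With the Deligne convention (eigenvalues of residues having real parts in $[0,1)$), the residues at $0,1,\lambda$ are nilpotent with eigenvalues $0,0$, and at $\infty$ both eigenvalues are $\tfrac12$, since the local monodromy $-J(2)$ has both eigenvalues equal to $-1 = e^{2\pi i/2}$. The formula $\deg\mathscr{E} = -\sum_{p\in D}\operatorname{tr}(\operatorname{Res}_p\nabla)$ then yields $\deg\mathscr{E} = -1$. Writing $F^1 \simeq \mathscr{O}_{\mathbb{P}^1}(a)$ and $\mathscr{E}/F^1 \simeq \mathscr{O}_{\mathbb{P}^1}(b)$ (both are line bundles since $\mathbb{V}$ has rank $2$ and the VHS is nontrivial -- the latter because $(\star)$ has non-semisimple local monodromy, precluding unitarity), we get $a+b = -1$.

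Next I would obtain the upper bound $a \leq 0$ from Griffiths transversality: $\nabla$ induces a Higgs field $\theta : F^1 \to (\mathscr{E}/F^1)\otimes \Omega^1_{\mathbb{P}^1}(\log D) \simeq \mathscr{O}_{\mathbb{P}^1}(b+2)$, and $\theta$ must be nonzero, since otherwise $F^1$ would be a flat sub-bundle of $\mathscr{E}$, contradicting irreducibility of $\mathbb{V}$. A nonzero map $\mathscr{O}(a) \to \mathscr{O}(b+2)$ on $\mathbb{P}^1$ forces $a \leq b + 2$, and together with $a+b = -1$ this gives $a \leq 0$. For the matching lower bound, I would invoke parabolic polystability of the Higgs bundle $(F^1 \oplus \mathscr{E}/F^1, \theta)$ coming from the polarized $\mathbb{C}$-VHS: the parabolic weights are $0$ at $0,1,\lambda$ and $\tfrac12,\tfrac12$ at $\infty$, so the total parabolic degree is $-1 + 1 = 0$. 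The quotient $\mathscr{E}/F^1$ sits inside the associated graded as a Higgs sub-bundle, because on $\operatorname{gr}_F$ we have $\theta(\mathscr{E}/F^1) \subset \operatorname{gr}_F^{-1}\otimes\Omega = 0$; polystability then yields $\operatorname{pardeg}(\mathscr{E}/F^1) = b + \tfrac12 \leq 0$, so $b \leq -1$ and hence $a \geq 0$.

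Combining the two bounds forces $a = 0$ and $b = -1$, so $F^1 \simeq \mathscr{O}_{\mathbb{P}^1}$ and $\mathscr{E}/F^1 \simeq \mathscr{O}_{\mathbb{P}^1}(-1)$. The extension $0 \to F^1 \to \mathscr{E} \to \mathscr{E}/F^1 \to 0$ then splits because $\operatorname{Ext}^1(\mathscr{O}(-1), \mathscr{O}) = H^1(\mathbb{P}^1, \mathscr{O}(1)) = 0$, yielding the desired $\mathscr{E} \simeq \mathscr{O}_{\mathbb{P}^1} \oplus \mathscr{O}_{\mathbb{P}^1}(-1)$. The most delicate step is the bookkeeping of parabolic weights on sub-bundles (in particular ensuring the weight $\tfrac12$ at $\infty$ is correctly inherited by $\mathscr{E}/F^1$) and the observation that the Hodge \emph{quotient}, rather than $F^1$ itself, is the $\theta$-invariant piece; once that is in place, the numerical constraints immediately collapse the possibilities.
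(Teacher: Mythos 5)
Your proposal is correct and follows essentially the same route as the paper: compute $\deg\mathscr{E}=-1$ from the residues, use Simpson's parabolic (poly)stability of the graded Higgs bundle $F^1\oplus\mathscr{E}/F^1$ together with the nonvanishing of $\theta:F^1\to(\mathscr{E}/F^1)\otimes\Omega^1_{\mathbb{P}^1}(\log D)$ to pin down $\deg F^1=0$, $\deg\mathscr{E}/F^1=-1$, and then split the extension via $\operatorname{Ext}^1(\mathscr{O}(-1),\mathscr{O})=0$. The only cosmetic difference is that you apply the (poly)stability inequality directly to the $\theta$-invariant sub-bundle $\mathscr{E}/F^1$ (getting $\deg\mathscr{E}/F^1\le-1$), whereas the paper phrases the same constraint as positivity of the parabolic degree of the quotient $F^1$; the weight bookkeeping ($0$ at $0,1,\lambda$ and $\tfrac12$ at $\infty$) agrees in both.
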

\begin{proof}
	From $(\star)$, the residue matrices of $\nabla$ at $\{0,1,\lambda\}$ have trace zero, and the residue matrix at $\infty$ has  trace $1$. This means that $\mathscr{E}$ and hence $F^1\oplus \mathscr{E}/\mathscr{F}^1$  has degree $-1$, by e.g.~\cite[B.3]{esnault1986logarithmic}. Let $$\theta: F^1\to \mathscr E/F^1\otimes \Omega^1_{\mathbb{P}^1}(\log D),$$ be the non-zero $\mathscr{O}$-linear map obtained from $\nabla$.   As in the introduction we consider the Higgs bundle $$F^1\oplus \mathscr{E}/\mathscr{F}^1$$ with the Higgs field $\begin{pmatrix} 0 & \theta \\ 0& 0\end{pmatrix}.$ 
	
	By Simpson's theory \cite[Theorem 8]{simpson1990harmonic}, this Higgs bundle is parabolically stable of parabolic degree zero. Parabolic stability implies the parabolic degree of $F^1$ is positive, and hence the honest degree satisfies $\deg F^1>-1$. The fact that $\theta$ is non-zero implies that $\deg F^1\leq \deg \mathscr{E}/F^1+\deg \Omega^1_{\mathbb{P}^1}(\log D)$, and hence that $2\deg F^1\leq \deg \mathscr{E}+2=1$. 
	
	Hence $\deg F^1=0$, and $\deg \mathscr{E}/F^1=-1$. Finally, we must have that the extension $$0\to F^1\to \mathscr{E}\to \mathscr{E}/F^1\to 0$$ splits as $\on{Ext}^1(\mathscr{O}_{\mathbb{P}^1}(-1), \mathscr{O}_{\mathbb{P}^1})=0.$
\end{proof}
Let $D=\{0, 1,\infty, \lambda\}$. Given $\mathbb{V}$ an irreducible local system satisfying $(\star)$ and underlying a (necessarily unique, by irreducibility) $\mathbb{C}$-VHS, the above proposition shows that we may canonically associate to $\mathbb{V}$ a nonzero map $\theta: \mathscr{O}_{\mathbb{P}^1}\to \mathscr{O}_{\mathbb{P}^1}(-1)\otimes \Omega^1_{\mathbb{P}^1}(\log D)=\mathscr{O}_{\mathbb{P}^1}(1),$ which vanishes at a unique point $w(\mathbb{V})$ of $\mathbb{P}^1$, as in the introduction.
\begin{proposition}\label{prop:up-to-iso}
	With assumptions as in \autoref{prop:star-decomposition}, the point $w(\mathbb{V})$ determines $\mathbb{V}$ up to isomorphism.
\end{proposition}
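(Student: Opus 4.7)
\emph{Plan.} My plan is to invoke the non-abelian Hodge correspondence for parabolic bundles on $(\mathbb{P}^1, D)$: since the assignment sending $\mathbb{V}$ to its graded parabolic Higgs bundle is injective on isomorphism classes of irreducible parabolic local systems (a fact already used in the proof of \autoref{prop:star-decomposition}), it suffices to show that this parabolic Higgs bundle is determined up to isomorphism by $w(\mathbb{V})$.

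By \autoref{prop:star-decomposition} the underlying graded bundle is canonically $\mathscr{O}_{\mathbb{P}^1} \oplus \mathscr{O}_{\mathbb{P}^1}(-1)$ with $F^1 = \mathscr{O}_{\mathbb{P}^1}$, and the Higgs field $\theta$ is a nonzero section of $\mathscr{O}_{\mathbb{P}^1}(-1)\otimes \Omega^1_{\mathbb{P}^1}(\log D)\cong \mathscr{O}_{\mathbb{P}^1}(1)$. Since $H^0(\mathbb{P}^1,\mathscr{O}(1))$ is two-dimensional, a nonzero section is determined up to scalar by its unique zero, so $\theta$ is determined by $w(\mathbb{V})$ up to scaling. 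Such a scalar is realized by the $\mathbb{G}_m$-automorphism of $\mathscr{E} = F^1 \oplus \mathscr{E}/F^1$ that scales the summand $\mathscr{E}/F^1$, so the isomorphism class of the graded Higgs bundle $(\mathscr{E},\theta)$ depends only on $w(\mathbb{V})$.

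It remains to check that the parabolic structure at each puncture is also rigid under $(\star)$. This condition fixes the local monodromies, and hence the eigenvalues of the residues of $\nabla$ on the Deligne canonical extension: $(0,0)$ at $0,1,\lambda$ and $(1/2,1/2)$ at $\infty$. In each case the single repeated eigenvalue means that the parabolic structure carries only weights and no nontrivial flag, and those weights are themselves fixed by $(\star)$. Hence the full graded parabolic Higgs bundle is determined up to isomorphism by $w(\mathbb{V})$, and Simpson's correspondence concludes the proof.

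The main points requiring attention are verifying that the diagonal $\mathbb{G}_m$-automorphism of $\mathscr{E}$ genuinely absorbs the scaling ambiguity of $\theta$ (a direct check, since the two summands are non-isomorphic line bundles) and that no hidden parabolic flag appears at a puncture (handled by the repeated-eigenvalue analysis); both are routine. The substantive input is Simpson's non-abelian Hodge correspondence for parabolic bundles on curves, which is standard in this context and was already invoked in \autoref{prop:star-decomposition} to establish parabolic stability.
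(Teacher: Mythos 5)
Your proposal is correct and follows essentially the same route as the paper: $w(\mathbb{V})$ pins down $\theta$ up to a nonzero scalar (absorbed by an automorphism of $\mathscr{O}_{\mathbb{P}^1}\oplus\mathscr{O}_{\mathbb{P}^1}(-1)$), hence pins down the stable graded parabolic Higgs bundle from \autoref{prop:star-decomposition}, and Simpson's correspondence then recovers $\mathbb{V}$. Your extra remarks on the parabolic weights being forced by $(\star)$ and the absence of a nontrivial flag are accurate details the paper leaves implicit.
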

\begin{proof}
	The point $w(\mathbb{V})$ determines $\theta$ up to multiplication by a non-zero scalar, which by \autoref{prop:star-decomposition} above determines the stable parabolic Higgs bundle associated to $\mathbb{V}$ up to isomorphism. Now Simpson's theory \cite[Theorem 8]{simpson1990harmonic}  tells us that this parabolic Higgs bundle determines $\mathbb{V}$. 
\end{proof}
\subsection{Proofs}
We now begin with the proofs of our main theorems in characteristic zero.
\begin{lemma}\label{lemma:finite-order}
Let $X$ be a smooth variety. A rank one local system $\mathbb{L}$ on $X$ is of geometric origin if and only if it is torsion.
\end{lemma}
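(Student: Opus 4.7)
The plan is to prove the forward direction directly from the definition of geometric origin, and to prove the reverse direction by combining closure of geometric-origin local systems under tensor powers with the finiteness theorem of \cite{litt2021arithmetic} mentioned in the introduction.

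For the forward direction: if $\mathbb{L}^{\otimes n}\simeq \mathbb{C}_X$, then the associated character $\chi\colon \pi_1(X)\to \mu_n$ factors through a finite quotient, which determines a connected finite \'etale Galois cover $f\colon Y\to X$ with group $\mathbb{Z}/n$. Because $f$ is finite \'etale it is smooth and proper as a morphism, with zero-dimensional fibers, and $\mathbb{L}$ appears as the $\chi$-isotypic summand of $R^0f_*\mathbb{C}=\bigoplus_{k=0}^{n-1}\mathbb{L}^{\otimes k}$, exhibiting $\mathbb{L}$ as of geometric origin (with $U=X$).

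For the reverse direction, I would first record that the class of local systems of geometric origin is closed under tensor products, duals, and subquotients: tensor products via K\"unneth applied to a fiber product $\mathcal{Y}_1\times_U\mathcal{Y}_2\to U$ of smooth proper families, and duals via Poincar\'e duality on the smooth proper fibers. In particular, if $\mathbb{L}$ is of geometric origin, then so is every tensor power $\mathbb{L}^{\otimes n}$ for $n\in \mathbb{Z}$. Next I would show that the monodromy character $\chi\colon \pi_1(X)\to \mathbb{C}^*$ takes values in a fixed number field $K$. Indeed, writing $\mathbb{L}|_U$ as a $\mathbb{C}$-summand of some $R^i\pi_*\mathbb{C}$ (a subquotient is automatically a summand by semisimplicity of polarizable $\mathbb{Q}$-VHS), each $\chi(\gamma)$ is an eigenvalue of the monodromy action on the integral lattice $R^i\pi_*\mathbb{Z}/(\text{torsion})$, hence is an algebraic integer; as $\pi_1(U)$ is finitely generated, the image of $\chi$ lies in a number field $K$.

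Finally I would conclude by invoking \cite{litt2021arithmetic}: since $K$ embeds in some finite extension of $\mathbb{Q}_p$ (for any prime $p$, by localizing at a place of $K$ above $p$), there are only finitely many rank-one $K$-local systems on $X$ of geometric origin, so not all the $\mathbb{L}^{\otimes n}$ can be pairwise non-isomorphic. Hence some nontrivial tensor power of $\mathbb{L}$ is trivial, and $\mathbb{L}$ is torsion. The main (non-)obstacle is really just the closure properties of the class of geometric-origin local systems and the number-field-ness of the monodromy values, both standard given K\"unneth, Poincar\'e duality, and semisimplicity of polarizable VHS; the genuinely hard input is packaged into \cite{litt2021arithmetic}.
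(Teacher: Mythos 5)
Your proof is correct, but the ``only if'' direction takes a genuinely different route from the paper. The paper's argument is elementary and self-contained: after noting (essentially as you do) that the monodromy scalars are algebraic integers lying in a number field $K$, it observes that for \emph{every} embedding $\mathscr{O}_K\hookrightarrow \mathbb{C}$ the conjugated local system is again of geometric origin of rank one, hence underlies a polarizable $\mathbb{C}$-VHS and is therefore unitary; so each monodromy scalar is an algebraic integer all of whose Galois conjugates have absolute value one, and Kronecker's theorem gives that it is a root of unity. You instead combine closure of geometric origin under tensor powers (via K\"unneth and Poincar\'e duality on fiber products, with the subquotient-to-summand reduction by Deligne semisimplicity) with the finiteness theorem of \cite{litt2021arithmetic} quoted in the introduction, and conclude by pigeonhole among the powers $\mathbb{L}^{\otimes n}$. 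This is logically sound: the rank is fixed (equal to one), all powers have monodromy in the fixed number field $K$, and $K$ embeds in a finite extension of $\mathbb{Q}_p$, so the citation applies in the form the paper states it. The trade-off is that you outsource the entire content of the lemma to a deep arithmetic finiteness theorem (which in particular requires spreading out $X$ and the families over a finitely generated field---routine, but worth a sentence), whereas the paper's unitarity-plus-Kronecker argument is a few lines, uses only classical Hodge theory, and directly identifies each monodromy scalar as a root of unity rather than deducing torsionness indirectly. Your forward direction coincides with the paper's.
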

\begin{proof}
Torsion evidently implies geometric origin, as all torsion rank one local systems arise as summands of $\pi_*\mathbb{C}$ for $\pi: Y\to X$ some cyclic \'etale cover, so we prove the converse. 

Suppose $\mathbb{L}$ is rank one and of geometric origin. Then the monodromy representation of $\mathbb{L}$ is defined over the ring of integers $\mathscr{O}_K$ of some number field $K$. Moreover for each embedding $\mathscr{O}_K\hookrightarrow\mathbb{C}$, $\mathbb{L}\otimes_{\mathscr{O}_K}\mathbb{C}$ is unitary, as it underlies a polarizable $\mathbb{C}$-VHS of rank one. Hence if $\gamma\in \pi_1(X)$ is a loop, the scalar in $\mathscr{O}_K^\times$ given by the monodromy of $\gamma$ is an algebraic integer all of whose Galois conjugates have absolute value one, hence a root of unity.
\end{proof}

\begin{proposition}[\autoref{conj:torsion}]\label{prop:zuoshort}
	Suppose $\mathbb{V}$ is an irreducible rank $2$ local system on $X$ satisfying $(\star)$.  $\mathbb{V}$ is of geometric origin if and only if $\mathbb{V}$ underlies a $\mathbb{C}$-VHS with $w(\mathbb{V})=f(x)$, for $x$ a torsion point of $E$.
\end{proposition}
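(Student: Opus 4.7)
The plan is to use \autoref{lemma:higgsvanish} and \autoref{prop:up-to-iso} to set up a correspondence between irreducible rank $2$ VHS on $X$ satisfying $(\star)$ and unitary rank one local systems on $E$, via $MC_{-1}(f,-)$, under which geometric origin matches up on both sides. Given such a $\mathbb{V}$, set $w_0 = w(\mathbb{V}) \in \mathbb{P}^1$ and pick $q \in f^{-1}(w_0)$. Let $\mathbb{L}_q$ be the unitary rank one local system on $E$ corresponding to the degree zero line bundle $\mathcal{O}_E(e-q)$ with its canonical unitary connection. By \autoref{prop:localmon}, $MC_{-1}(f, \mathbb{L}_q)$ has rank $2$ and satisfies $(\star)$, and since $\mathbb{L}_q$ is unitary, $MC_{-1}(f, \mathbb{L}_q)$ underlies a $\mathbb{C}$-VHS. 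By \autoref{lemma:higgsvanish} its Higgs field vanishes only at $f(q) = w_0$, so \autoref{prop:up-to-iso} yields $\mathbb{V} \simeq MC_{-1}(f, \mathbb{L}_q)$.

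For the ``if'' direction, suppose $q$ is torsion. Then $\mathbb{L}_q$ has finite monodromy, hence is of geometric origin as a summand of $\pi_*\mathbb{C}$ for $\pi:\tilde E\to E$ the associated cyclic \'etale cover; since middle convolution is cohomology of a family, $\mathbb{V} \simeq MC_{-1}(f, \mathbb{L}_q)$ is also of geometric origin. For the ``only if'' direction, suppose $\mathbb{V}$ is of geometric origin. Then $\mathbb{V}$ automatically underlies a $\mathbb{C}$-VHS (any irreducible subquotient of a polarizable VHS does), so the construction above applies and produces $\mathbb{L}_q$ with $\mathbb{V} \simeq MC_{-1}(f, \mathbb{L}_q)$. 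Invoking involutivity of $MC_{-1}$ (as $\chi_{-1}^2 = \text{triv}$), one recovers $f_*\mathbb{L}_q|_X$ as $MC_{-1}(\mathbb{V})$, and then $\mathbb{L}_q$ from $f_*\mathbb{L}_q|_X$ by pulling back along the \'etale map $f|_{f^{-1}(X)}$, picking out the appropriate Galois summand, and extending canonically to all of $E$. Since each step is geometric, $\mathbb{L}_q$ is of geometric origin; by \autoref{lemma:finite-order} it is torsion, so $\mathcal{O}_E(e-q)$ is torsion and $q$ is a torsion point.

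The main obstacle is the ``only if'' direction, namely verifying that geometric origin of $\mathbb{V}$ forces geometric origin of $\mathbb{L}_q$. This reduces to giving a geometric description of the inverse of $MC_{-1}(f,-)$: Katz's involutivity of $MC_{\chi_{-1}}$ combined with the interpretation of $MC_{-1}$ as $R^1(\pi_2)_*$ of an appropriate sheaf should suffice, but some care with branch points, local monodromies, and the interplay of $f_*$ and $f^*$ is needed to pass from a rank $2$ local system on $X$ back to a rank $1$ local system on $E$. An alternative avenue, mentioned in the introduction, is to replace this step with a Higgs-de Rham flow argument, using that the flow is $[p]$ on $E$ and hence detects torsion at the level of $w(\mathbb{V})$.
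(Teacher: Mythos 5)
Your proposal is correct and follows essentially the same route as the paper: identify $\mathbb{V}$ with $MC_{-1}(f,\mathbb{L}_q)$ via \autoref{lemma:higgsvanish} and \autoref{prop:up-to-iso}, get the ``if'' direction from preservation of geometric origin under middle convolution, and get the ``only if'' direction by inverting the convolution to recover $f_*\mathbb{L}_q$ and hence $\mathbb{L}_q$, then applying \autoref{lemma:finite-order}. The step you flag as needing care (inverting $MC_{-1}$) is exactly what the paper handles by citing Katz's involutivity theorem \cite[Theorem 2.9.8(1)]{katz1996rigid}, followed by the same $f^*f_*\mathbb{L}=\mathbb{L}\oplus\mathbb{L}^\vee$ trick you describe.
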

\begin{proof}
Let $q$ be a point of $E$. Then by Narasimhan-Seshadri \cite{narasimhan1965stable}, the unique unitary connection $\nabla$ on $\mathscr{L}=\mathscr{O}(q-e)$ has torsion monodromy if and only if $q$ itself is torsion; in particular the sheaf $\mathbb{L}$ of flat sections of $(\mathscr{L}, \nabla)$ is of geometric origin if and only if $q$ is torsion, by \autoref{lemma:finite-order}. 

Suppose $q$, and hence $\mathbb{L}$, is torsion. Now $\mathbb{V}=MC_{-1}(f, \mathbb{L})=MC_{-1}(f_*\mathbb{L})$ is evidently of geometric origin (as middle convolution preserves the property of being of geometric origin), is irreducible \cite[Theorem 2.9.8(2)]{katz1996rigid}, and $w(\mathbb{V})=q$ by \autoref{lemma:higgsvanish}. As $w(\mathbb{V})$ uniquely determines $\mathbb{V}$ by \autoref{prop:up-to-iso}, we have shown that if $w(\mathbb{V})$ is torsion, then $\mathbb{V}$ is of geometric origin.

Conversely, suppose $\mathbb{V}$ is of geometric origin and $w(\mathbb{V})=q$ is not torsion; we have that $\mathbb{V}=MC_{-1}(f, \mathbb{L})=MC_{-1}(f_*\mathbb{L})$ for $\mathbb{L}$ the sheaf of flat sections to the unitary flat line bundle $(\mathscr{O}(q-e), \nabla)$ as above, by \autoref{lemma:higgsvanish} and \autoref{prop:up-to-iso}. Hence \cite[Theorem 2.9.8(1)]{katz1996rigid} $f_*\mathbb{L}=MC_{-1}(\mathbb{V})$. So if $\mathbb{V}$ was of geometric origin, the same would be true for $f_*\mathbb{L}$, and hence for $f^*f_*\mathbb{L}=\mathbb{L}\oplus \mathbb{L}^\vee$. Hence $\mathbb{L}$ itself would be of geometric origin, hence have finite monodromy, by \autoref{lemma:finite-order}. But this contradicts our assumption that $q$ is not torsion.
\end{proof}
\begin{proof}[Proof of \autoref{thm:main1} and \autoref{thm:explicit-form}]
	\autoref{thm:main1} is immediate from what we've proven above; we may simply consider $MC_{-1}(f, \mathbb{L})$ as $\mathbb{L}$ varies over all rank one local systems on $E$ with finite monodromy. MCG-finiteness follows from the fact that the middle convolution is equivariant for the action of the mapping class group, and local systems with finite monodromy are evidently MCG-finite.
	
	To prove \autoref{thm:explicit-form}, note that the proof above tells us that each $\mathbb{V}$ of geometric origin satisfying $(\star)$ is of the form $R^1(\pi_2)_*i_*(\pi_1^*\mathbb{L}\otimes m^*\chi)$ for $\chi$ of order $2$ and $\mathbb{L}$ a finite order rank one local system on $E$, say of order $m$. But the local system $\pi_1^*\mathbb{L}\otimes m^*\chi$ on $U=E\times \mathbb{P}^1\setminus (\Gamma_f \cup E\times \{\infty\})$ is trivialized on the preimage of $U$ in the variety $Z'_m$ defined in the statement of \autoref{thm:explicit-form}, by construction. Hence its cohomology appears in the cohomology of $Z'_m \to \mathbb{P}^1$, as desired.
\end{proof}

\section{Recollections on parabolic bundles and Higgs-de Rham flows}\label{section:recollections}
For the rest of the paper we will not in general be working over $\mathbb{C}$. We prepare to state and prove a precise form of \cite[Conjecture 4.8]{syz}, stated earlier as \autoref{thm:flow}.
\subsection{Parabolic de Rham and Higgs bundles}
Let $\mc{C}/k$ be a smooth proper curve over a field $k$, and $\mc{D}=\sum \mc{D}_i \subset \mc{C}$ a reduced effective divisor of degree $l$. We refer the reader to \cite[Definition 2.3]{rajuperiodic} for the definition of parabolic de Rham and Higgs bundles; roughly this consists of bundles $(V_{\alpha})_{\alpha}$ indexed by $(\alpha_1, \cdots , \alpha_l)$ with $\alpha_i\in \mb{R}$, such that 
\begin{itemize}
\item each $V_{\alpha}$ is itself a de Rham (respectively Higgs)  bundle, and
\item they are equipped with maps of de Rham (respectively Higgs) bundles $V_{\alpha} \xhookrightarrow{} V_{\beta}$ for all $\alpha \geq \beta$ (where we write $\alpha\geq \beta$ whenever $\alpha_i\geq \beta_i$ for all $i$).
\end{itemize}
The bundles $(V_{\alpha})_{\alpha}$ are required to satisfy several more conditions, for which we refer the reader to loc.cit..

We make the following definition following \cite[Example 2.5]{rajuperiodic}
\begin{definition}\label{defn:shiftedtrivial}
Let $V$ be an arbitrary vector bundle on $\mc{C}$. For $(\alpha_1, \cdots , \alpha_l) \in \mb{Q}^{l}$,  we define the parabolic bundle $V(-\sum_{i=1}^l \mc{D}_i)$ by 
\[
V(-\sum_{i=1}^l \alpha_i \mc{D}_i)_{\beta}=V(\sum_{i=1}^l -\lceil \alpha_i +\beta_i \rceil \mc{D}_i).
\] 
\end{definition}
\begin{remark}
When   $\alpha_i=0$, the above gives  the trivial parabolic structure along $\mc{D}_i$. 
\end{remark}

The following gives a description of parabolic bundles on curves; see \cite{biswas1997parabolic} for the analogous result in characteristic zero:

\begin{proposition}[{\cite[Proposition 2.14]{rajuperiodic}}]\label{prop:descendparabolic}
Suppose  $\mc{C}'$ and $\mc{C}$ are smooth curves over $\mb{F}_q$ and $\mc{C}'\rightarrow \mc{C}$ is a $\mb{Z}/N$-covering, branched at the divisor $\mc{D} \subset \mc{C}$. Then there is an equivalence of categories between $\mb{Z}/N$-equivariant connections (resp. Higgs bundles) on $\mc{C}'$ and the category of parabolic connections on $\mc{C}$ whose parabolic structure is supported on $\mc{D}$ with weights in $\frac{1}{N}\mb{Z}$.
\end{proposition}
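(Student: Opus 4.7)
The plan is to construct quasi-inverse functors in both directions and to reduce the verification to a formal-local calculation at each branch point. Write $G = \mb{Z}/N$, let $\pi\colon \mc{C}' \to \mc{C}$ denote the cover, and let $\widetilde{\mc{D}} \subset \mc{C}'$ be the reduced ramification divisor, so that $\pi^*\mc{D}_i = N\widetilde{\mc{D}}_i$.

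First I would define the functor $\Phi$ from $G$-equivariant bundles with connection (resp.\ Higgs field) on $\mc{C}'$ to parabolic objects on $\mc{C}$ with weights in $\tfrac{1}{N}\mb{Z}$ by
\[
\Phi(V')_\alpha \defeq \pi_*\bigl(V' \otimes \mc{O}_{\mc{C}'}(-\sum_i \lceil N\alpha_i\rceil\, \widetilde{\mc{D}}_i)\bigr)^G,
\]
with transition maps induced by the obvious inclusions. From this formula the parabolic axioms are mechanical: the projection formula together with $\pi^*\mc{D}_i = N\widetilde{\mc{D}}_i$ gives $\Phi(V')_{\alpha+\mathbf{e}_i} = \Phi(V')_\alpha(-\mc{D}_i)$, and right continuity comes from the ceiling function. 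The equivariant connection descends to each $\Phi(V')_\alpha$ because $\pi$ is \'etale off $\mc{D}$, and a vector field with logarithmic poles along $\mc{D}$ pulls back to one with logarithmic poles along $\widetilde{\mc{D}}$, so the residue structure is respected. The inverse functor $\Psi$ goes the other way: away from $\widetilde{\mc{D}}$ it is just $\pi^*V_0$ with its tautological $G$-action, and in the formal neighborhood of each ramification point it is glued from a $G$-equivariant module whose isotypic decomposition recovers the associated graded of the parabolic filtration, with $G$ acting on the piece of weight $\alpha$ through $\zeta\mapsto \zeta^{N\alpha}$.

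Next I would verify that $\Phi\circ\Psi$ and $\Psi\circ\Phi$ are canonically isomorphic to the identity, which is \'etale-local on $\mc{C}$. After passing to the formal neighborhood of a branch point $P$, with uniformizer $t'$ at $P' \in \pi^{-1}(P)$ satisfying $(t')^N = t$ and $G$ acting by $t' \mapsto \zeta t'$, every finite free $G$-equivariant $k[\![t']\!]$-module decomposes canonically as $M' = \bigoplus_{j=0}^{N-1} M'_j$, with $G$ acting on $M'_j$ through $\zeta^j$. The recipe for $\Phi$ tautologically encodes this decomposition as a parabolic filtration with weights in $\tfrac{1}{N}\mb{Z}$, and the recipe for $\Psi$ recovers the $G$-action from the jumps; these two observations are inverse. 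For the connection/Higgs part, a $G$-equivariant logarithmic connection at $P'$ has residue commuting with $G$, hence is diagonal in the $M'_j$ decomposition, and its eigenvalue on $M'_j$ becomes, after descent, the residue of the descended connection on the corresponding parabolic subquotient shifted by $\tfrac{j}{N}$; the Higgs case is identical and strictly easier.

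The main obstacle I anticipate is this local bookkeeping: keeping the ceilings $\lceil N\alpha_i\rceil$, the character weights $\zeta^j$, and the residues of the connection simultaneously aligned so that the parabolic axioms and $G$-equivariance translate into one another exactly. Once the local dictionary is pinned down at each branch point, gluing, descent, and the verification of functoriality are formal, and it remains only to observe that the construction is compatible with morphisms on both sides.
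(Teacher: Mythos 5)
The paper does not actually prove this statement: it is imported verbatim as \cite[Proposition 2.14]{rajuperiodic}, so there is no in-paper argument to compare against. Your construction is the standard one underlying that reference (going back to Mehta--Seshadri, Biswas, Borne): the invariant pushforward $\pi_*\bigl(V'\otimes\mc{O}_{\mc{C}'}(-\sum_i\lceil N\alpha_i\rceil\widetilde{\mc{D}}_i)\bigr)^{\mb{Z}/N}$ in one direction, a glued/parabolic-pullback construction in the other, and a formal-local check at each branch point identifying isotypic pieces of the equivariant module with the jumps of the parabolic filtration, with residues shifted by the weights. That is the right proof and essentially the only known one, so I have no substantive objection to the route. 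One caveat you should make explicit, since you are in characteristic $p$: the local model $(t')^N=t$ with $\mb{Z}/N$ acting through a primitive $N$-th root of unity, and the canonical isotypic decomposition $M'=\bigoplus_j M'_j$, both require the cover to be tame, i.e.\ $\gcd(N,p)=1$ (and, for the character decomposition, $\mu_N$ in the base field, or a harmless extension of scalars); the exactness of $(-)^{G}$ also uses $N$ invertible. Without tameness the equivalence is simply false, so this hypothesis belongs in the statement; in the paper's applications it is satisfied ($N=2$ with $p\neq 2$, and the cyclic covers $\tilde{E}\to E$ are \'etale of odd degree). A second small point: for connections, beyond noting that the residue is diagonal on the isotypic pieces, one should record that the descended object is ``adjusted'' in the sense the paper later uses, i.e.\ the residue eigenvalue on the graded piece of weight $j/N$ is congruent to that weight modulo the integral part --- this is exactly your ``shifted by $j/N$'' bookkeeping, and it is the content that makes the functor land in $MIC^{par}_{p-1,N}$ rather than just in parabolic flat bundles.
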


\subsection{Conjecture of Sun-Yang-Zuo}\label{section:stateconj}

We now recall the statement of  \cite[Conjecture 4.8]{syz}. We consider logarithmic graded semistable Higgs bundles on $(\mb{P}^1,D=0+1+\lambda + \infty)$ such that the underlying  graded vector bundle is isomorphic to $\mc{O}\oplus \mc{O}(-1)$. 
By \cite{syz}, the moduli space  $\mc{M}_{HIG}$ of such is isomorphic to  $\mb{P}^1$, by taking the unique zero of the Higgs field;  they then consider the \emph{twisted Higgs-de Rham flow} on $\mc{M}_{HIG}$, which in general depends on a lift of $(\mb{P}^1, D)$ to $W_2$, inducing a self-map $\psi_p: \mb{P}^1 \rightarrow \mb{P}^1$.

\begin{conjecture}[Sun-Yang-Zuo]\label{conj:syz}
For $p\neq 2$ and any lifting of $(\mb{P}^1, D)$ to $W_2$, the following diagram commutes
\begin{equation}\label{eqn:syz}
\begin{tikzcd}
E \arrow[r, "{[p]}"] \arrow[d, "f"]
& E \arrow[d, "f"] \\
\mb{P}^1 \arrow[r, "\psi_p"]
& \mb{P}^1
\end{tikzcd}
\end{equation}
where $f: E\to \mb{P}^1$ is the elliptic curve double cover branched over $D$, with $f^{-1}(\infty)$ being the identity.

\end{conjecture}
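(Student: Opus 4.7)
The approach, as sketched in the introduction, is to push the Higgs-de Rham flow $\psi_p$ on $\mathcal{M}_{HIG}\simeq \mathbb{P}^1$ back along a parabolic Higgs analogue of the middle convolution $MC_{-1}$ to the moduli of rank one, degree zero Higgs bundles on $E$ with zero Higgs field, where the Higgs-de Rham flow is manifestly multiplication by $p$.

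To begin, I would define a parabolic Higgs version $MC_{-1}^{\on{Higgs}}(f,-)$ of the middle convolution of \autoref{defn:mcf}, producing parabolic Higgs bundles on $(\mathbb{P}^1,D)$ out of parabolic Higgs bundles on $E$. Concretely this can be done either directly via hypercohomology of a Higgs complex, in the style of the proposition preceding \autoref{lemma:higgsvanish}, or by descending an equivariant construction through the double cover $f$ using \autoref{prop:descendparabolic}. For $q \in E$, feeding in $(\mathcal{L}_q, 0):=(\mathcal{O}_E(q-e), 0)$ yields a rank two graded nilpotent parabolic Higgs bundle on $(\mathbb{P}^1,D)$; the proof of \autoref{lemma:higgsvanish} is characteristic-free and applies essentially verbatim, showing that the underlying graded bundle is $\mathcal{O}_{\mathbb{P}^1}\oplus \mathcal{O}_{\mathbb{P}^1}(-1)$ and that its Higgs field vanishes precisely at $f(q)$. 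Under the Sun-Yang-Zuo identification $\mathcal{M}_{HIG}\simeq \mathbb{P}^1$ by vanishing locus, this defines a morphism $\Phi: E \to \mathcal{M}_{HIG}$ coinciding with $f$.

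The central step is then to verify that the diagram
\[
\begin{tikzcd}
E \arrow[r, "{[p]}"] \arrow[d, "\Phi"'] & E \arrow[d, "\Phi"] \\
\mathcal{M}_{HIG} \arrow[r, "\psi_p"] & \mathcal{M}_{HIG}
\end{tikzcd}
\]
commutes. The Higgs-de Rham flow applied to a degree zero rank one Higgs bundle $(\mathcal{L}, 0)$ on $E$ is computed by $C^{-1}(\mathcal{L}, 0) = (F_E^*\mathcal{L}, \nabla_{\on{can}})$ followed by the associated graded for the trivial rank-one Hodge filtration, which simply returns $(F_E^*\mathcal{L}, 0)$; under $\on{Pic}^0(E)\simeq E$ (via $q\mapsto \mathcal{L}_q$) this is multiplication by $p$. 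It therefore suffices to show that both legs of the flow, the Ogus-Vologodsky inverse Cartier transform $C^{-1}$ and the associated-graded operation, commute with middle convolution. For $C^{-1}$, the compatibility reduces \cite{ogusvologodsky}, by its behaviour as a tensor functor with respect to pullback and proper pushforward, to the single verification that the parabolic Higgs kernel defining $MC_{-1}^{\on{Higgs}}$ maps under $C^{-1}$ to the de Rham kernel on $\mathbb{G}_m$ underlying \autoref{section:MC-prelims}, which is a rank-one calculation. Compatibility of the associated-graded step follows formally, since the output Hodge filtration is induced from that on the input.

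The principal obstacle is precisely this compatibility: one must track parabolic weights along both $D$ and the ramification of $f$ throughout, verify that the chosen $W_2$-lift of $(\mathbb{P}^1, D)$ pulls back along $f$ to a compatible $W_2$-lift of $E$, and confirm that the correct de Rham kernel (together with its Hodge filtration) emerges from $C^{-1}$ applied to the Higgs kernel, without any unexpected correction from the ramification divisor. Once this compatibility is established the commutativity of the diagram in \autoref{conj:syz} (equivalently \autoref{thm:flow}) is immediate.
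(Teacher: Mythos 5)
There is a genuine gap at the step you yourself identify as the ``central'' one: the claim that the parabolic inverse Cartier transform commutes with your Higgs-theoretic middle convolution for an \emph{arbitrary} rank one input $(\mathcal{O}_E(q-e),0)$. You assert this reduces, ``by its behaviour as a tensor functor with respect to pullback and proper pushforward,'' to a rank-one check on the kernel; but $C^{-1}_{par}$ is not compatible with proper pushforward in that generality. The only compatibility available is \cite[Theorem 3.4]{ogusvologodsky} (and its log/parabolic variants): the inverse Cartier of the relative \emph{Higgs cohomology} of a smooth proper or semistable, $W_2$-liftable family is its relative de Rham cohomology. That statement applies only when the convolution is realized geometrically, i.e.\ when the input is \emph{torsion}, so that $MC_{-1}$ appears as an isotypic piece of the cohomology of the actual semistable families $Z_E, W_E$ of \autoref{prop:resolve}. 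This is exactly how the paper proceeds (\autoref{prop:periodic} and \autoref{defn:mchiggs}): the flow is identified on the bundles $MC_{-1}(\chi)$ via the $F_k$-semilinearity sending the $\chi$-isotypic piece to the $\chi^p$-isotypic piece. For non-torsion $q$ there is no family to push forward, and no proof (formal or otherwise) is offered that the flow commutes with convolution there --- indeed establishing such a commutation for all $q$ is essentially the full content of the theorem, not a reduction of it. A secondary issue: \autoref{lemma:higgsvanish} is not ``characteristic-free as stated''; its proof rests on the characteristic-zero description of the convolution Higgs field (unitarity, Deligne extensions), and the paper transfers it to characteristic $p$ by lifting to $W(\mathbb{F}_q)$ and specializing (\autoref{prop:higgszero2}), not by rerunning the argument mod $p$.

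Because your commutation claim is only actually provable at torsion points, your argument as written establishes agreement of $\psi_p\circ f$ and $f\circ[p]$ only on the images of torsion points, and you have no mechanism to pass from there to all of $\mathbb{P}^1$. The paper closes precisely this gap with a degree count: the map $g$ defined by the diagram and $\psi_p = \Gr\circ C^{-1}_{par}$ both have degree $p^2$ (the latter by \cite[\S 4.3]{syz}), so two such maps agreeing on infinitely many closed points --- the torsion points handled by the geometric construction --- must coincide. Your proposal is missing both that degree argument and the geometric realization needed to legitimately invoke Ogus--Vologodsky at the points where the comparison can be made.
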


\subsection{Translation to parabolic Higgs bundles}\label{section:translate}
Following \cite{rajuperiodic, shengtorsion}, we translate \autoref{conj:syz} into the language of parabolic Higgs-de Rham flows, which is more natural for our purposes. Let $k$ denote an algebraic closure of $\mb{F}_p$, with $p$ different from $2$,  and let $F_k: \Spec(k) \rightarrow \Spec(k)$ denote absolute Frobenius.
\begin{definition}
\begin{enumerate} \item 
Let $HIG^{par}_{p-1, N}(\mc{C}, \mc{D})$ denote the category of parabolic logarithmic Higgs bundles on $(\mc{C}, \mc{D})$ which are nilpotent of exponent $\leq p-1$, whose parabolic structures are supported on $\mc{D}$, with weights lying in $\frac{1}{N}\mb{Z}$.

\item Let $MIC^{par}_{p-1, N}(\mc{C}, \mc{D})$ be the category of adjusted (see \cite[Definition 2.9]{rajuperiodic})  logarithmic parabolic flat bundles on $(\mc{C}, \mc{D})$, whose $p$-curvatures and nilpotent part of the residues are nilpotent of exponent $\leq p-1$, and whose parabolic structures are  supported on $\mc{D}$ with weights lying in $\frac{1}{N} \mb{Z}$. 
\end{enumerate}
\end{definition} 

From \cite[Theorem 2.10]{rajuperiodic}, we have the parabolic version of the inverse Cartier transform:
\[
C^{-1}_{par}: HIG_{p-1,N}^{par}(\mc{C}, \mc{D}) \rightarrow MIC_{p-1,N}^{par}(\mc{C}, \mc{D}). 
\]
This in general  depends on a chosen lift of $(\mc{C}, \mc{D})$ to $W_2$. By taking the associated graded of the Harder-Narasimhan filtration, we also have the functor $\Gr: MIC_{p-1,N}^{par}(\mc{C}, \mc{D}) \rightarrow HIG_{p-1,N}^{par}(\mc{C}, \mc{D}) $. 

\begin{definition}
Let $\mc{M}_{\frac{1}{2}\infty}$ denote the moduli of graded semistable parabolic Higgs bundles on $(\mb{P}^1,D)$,  with underlying the graded parabolic   bundle $(\mc{O}\oplus\mc{O}(-1))(-\frac{1}{2}\infty)$ in the notation of \autoref{defn:shiftedtrivial}.
\end{definition}

\begin{proposition}[{\cite[Proposition 2.7]{shengtorsion}}]
The functor $\Gr\circ C^{-1}_{par}$ induces a self-map on $\mc{M}_{\frac{1}{2}\infty}$. Moreover there is a natural isomorphism $\mc{M}_{HIG}\simeq \mc{M}_{\frac{1}{2}\infty}$, identifying $\psi_p$ with $\Gr\circ C^{-1}_{par}$. 
\end{proposition}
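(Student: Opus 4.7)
My plan is to treat the moduli spaces $\mc{M}_{HIG}$ and $\mc{M}_{\frac{1}{2}\infty}$ as two equivalent encodings of the same Higgs-field datum $\theta:\mc{O}_{\mathbb{P}^1}\to \mc{O}_{\mathbb{P}^1}(-1)\otimes\Omega^1_{\mathbb{P}^1}(\log D)$, and then to check that the Frobenius-based flows agree after translating through this dictionary. There are three sub-claims to establish: (a) the isomorphism $\mc{M}_{HIG}\simeq \mc{M}_{\frac{1}{2}\infty}$, (b) the fact that $\Gr\circ C^{-1}_{par}$ induces a self-map on $\mc{M}_{\frac{1}{2}\infty}$, and (c) the matching of $\psi_p$ with $\Gr\circ C^{-1}_{par}$ under (a).

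For (a) and (b), I would observe that $\Omega^1_{\mathbb{P}^1}(\log D)\simeq \mc{O}_{\mathbb{P}^1}(1)$, so the Higgs-field data on either side is a nonzero section of $\mc{O}_{\mathbb{P}^1}(1)$ up to scaling, parameterized by its vanishing locus in $\mathbb{P}^1$. The two moduli spaces differ only in the parabolic bookkeeping at $\infty$: adding uniform weight $\tfrac12$ at $\infty$ raises the total parabolic degree from $-1$ to $0$ but leaves the graded semistability condition unchanged (the only candidate destabilizing subobject is $\mc{O}_{\mathbb{P}^1}$, and in both cases stability reduces to $\theta\neq 0$). That $\Gr\circ C^{-1}_{par}$ preserves the moduli space then follows from preservation of parabolic degree under the inverse Cartier transform, together with rigidity of parabolic line bundles on $\mathbb{P}^1$, which forces the two graded pieces of the Harder-Narasimhan filtration of the output to have the prescribed shape.

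The real content lies in (c). The twisted Higgs-de Rham flow $\psi_p$ on $\mc{M}_{HIG}$ is obtained by Frobenius-pulling back the graded Higgs bundle, twisting by an auxiliary integral divisor supported at $\infty$ in order to land back in $\mc{M}_{HIG}$ (otherwise one would have degree $-p$ rather than $-1$), then applying the logarithmic inverse Cartier transform and passing to the associated graded. In the parabolic language this integral twist is supplied automatically: Frobenius pullback multiplies each parabolic weight by $p$, so the weight $\tfrac12$ at $\infty$ becomes $\tfrac{p}{2}$, whose integer part $\tfrac{p-1}{2}$ is exactly the required line-bundle twist, while the fractional part $\tfrac12$ recovers the target parabolic weight. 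Naturality of $C^{-1}_{par}$ under the equivalence of \autoref{prop:descendparabolic} (applied to the $\mb{Z}/2$-cover $f:E\to \mathbb{P}^1$) then identifies the two constructions.

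The main obstacle I expect is carrying out (c) with full precision. The inverse Cartier transform $C^{-1}_{par}$ of \cite{rajuperiodic} and the twisted flow of \cite{syz} are each defined relative to a choice of $W_2$-lift, and one must verify that compatible lifts are used and that the various rounding and sign conventions (in particular the ceiling in \autoref{defn:shiftedtrivial}) match exactly. Once this bookkeeping is in place the comparison becomes formal; the heart of the matter is the observation that the half-integer parabolic weight at $\infty$ exactly accounts for the Frobenius-induced degree twist built into the definition of $\psi_p$.
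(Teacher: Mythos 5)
The paper does not actually prove this proposition: it is imported verbatim from Lin--Sheng--Wang \cite[Proposition 2.7]{shengtorsion}, so there is no internal argument to compare yours against. Judged on its own terms, your sketch isolates exactly the mechanism that the cited reference uses: the uniform parabolic weight $\tfrac12$ at $\infty$ is the bookkeeping device that absorbs the twist built into Sun--Yang--Zuo's twisted flow, because the parabolic inverse Cartier transform multiplies weights by $p$, and $\tfrac{p}{2}=\tfrac{p-1}{2}+\tfrac12$ splits into the integral twist by $\tfrac{p-1}{2}\cdot\infty$ occurring in the definition of $\psi_p$ plus the original weight $\tfrac12$. That is the right idea, and it is essentially how the result is proved in the literature.

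That said, what you have written is an outline rather than a proof, and the deferred steps are precisely where the content lies. (i) In your step (b), the assertion that the output of $\Gr\circ C^{-1}_{par}$ again has underlying graded parabolic bundle $(\mc{O}\oplus\mc{O}(-1))(-\tfrac12\infty)$ does not follow from ``rigidity of parabolic line bundles''; one needs the parabolic, characteristic-$p$ analogue of the argument in \autoref{prop:star-decomposition} (parabolic semistability plus nonvanishing of the Higgs field pins down the degrees of the two graded pieces), together with the check that nilpotency of exponent $\le p-1$ is preserved so the flow can be iterated. (ii) In step (c), your description of $\psi_p$ as ``Frobenius pullback, then twist, then inverse Cartier'' conflates operations --- Frobenius descent is internal to $C^{-1}$ --- and the actual identification requires unwinding Sun--Yang--Zuo's definition of the twisted flow in \cite[\S 4]{syz} against the ceiling convention of \autoref{defn:shiftedtrivial} (with that convention your claim that the parabolic degree becomes $0$ needs to be checked against the indexing), as well as verifying that the same $W_2$-lift of $(\mb{P}^1},D)$ is used on both sides; you acknowledge this but do not carry it out. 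None of these points looks like a genuine obstruction, but together they constitute the proof of \cite[Proposition 2.7]{shengtorsion}, so as written your proposal identifies the correct strategy without yet discharging it.
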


\section{Proof of Theorems in positive characteristic}

We now let $\lambda \in \mb{F}_q$ be distinct from $0$ and $1$, and denote by  $X$ the curve $\mb{P}^1\setminus \{0,1,\lambda, \infty\}$. As before, let $E\rightarrow \mb{P}^1$ be  the elliptic curve double cover branched at $0, 1, \lambda, \infty$, and let $E^{\circ}$ denote the curve $E-E[2]$.

\subsection{Middle convolution, again}\label{section:mcagain}
We now reinterpret the constructions of \autoref{section:MC-prelims} to define de Rham and Higgs analogues of the local systems $MC_{-1}(f_*\mathbb{L})$ constructed earlier, in positive characteristic.

We fix for the rest of this section a cyclic \'etale cover $\eta: \tilde{E}\rightarrow E$ of degree $m$, for some $m$ odd. We  denote by $S$ the surface $E\times \mb{P}^1$, and by $p_1, p_2$  the projection from $S$ to $E$ and $\mb{P}^1$ respectively. Let $D_S$ be the divisor $\Gamma + E\times {\infty}$ on $S$, where $\Gamma$ denotes the graph of $f: E\rightarrow \mb{P}^1$.

\begin{definition}\label{defn:surfaceszw}
Let $w: W'\rightarrow S$ be the double cover branched along $D_S$. Define the fiber product $Z'\defeq (\tilde{E}\times\mb{P}^1)\times_{E\times \mb{P}^1} W'$.

 For  $x\in X$, let $Z_x, W_x$ denote the fibers of $Z', W'$ at $x$; these fit into the following Cartesian square:
 \begin{equation}
 \begin{tikzcd}
Z_x \arrow[r] \arrow[d, "2:1"]
& W_x \arrow[d, "2:1"] \\
\tilde{E} \arrow[r,  "\eta"]
& E
\end{tikzcd}
\end{equation}
\end{definition}

\begin{notation}
Denote by  $Z_X, W_X$  the fiber products  $Z' \otimes_{\mb{P}^1} X, W' \otimes_{\mb{P}^1} X$ respectively, and  by  $J_{Z, X}, J_{W, X}$  the relative Jacobians of $Z_X, W_X$ over $X$. Also we write $z_X: Z_X\rightarrow X, w_X: W_X\rightarrow X$ for the natural maps.
 \end{notation}

\begin{proposition}\label{prop:resolve}
The pullbacks $W_{E^{\circ}}\defeq W'\times_{\mb{P}^1} E^{\circ} $, $Z_{E^{\circ}}\defeq Z'\times_{\mb{P}^1} E^{\circ} $, equipped with maps $w_{E^{\circ}}: W_{E^{\circ}}\rightarrow E^{\circ}$, $z_{E^{\circ}}: Z_{E^{\circ}}\rightarrow E^{\circ}$, may be compactified to semistable families  $w_E: W_E\rightarrow E$, $z_E: Z_E\rightarrow  E$ over $E$. Furthermore we may choose $Z_E$ and $W_E$  such that 
\begin{itemize} 
\item the natural $(\mb{Z}/m\times \mb{Z}/2)\times \mb{Z}/2$-action \footnote{the $\mb{Z}/m\times \mb{Z}/2$-action comes from the construction of  $Z'$ as a fiber product in Definition~\ref{defn:surfaceszw}, whereas   the  $\mb{Z}/2$-action one acting on $Z_{E^{\circ}}=Z\times_{\mb{P}^1} E^{\circ}$ via the $-1$-map on the second factor} on $Z_{E^{\circ}}$ extends,
\item the natural $\mb{Z}/2\times \mb{Z}/2$-action on $W_{E^{\circ}}$ extends, and 
\item there is a commutative diagram 
\begin{equation}
\begin{tikzcd}[column sep=scriptsize]
Z_E \arrow[dr, "z_E" '] \arrow[rr, "\theta"]{}
& & W_E \arrow[dl, "w_E"] \\
& E 
\end{tikzcd}
\end{equation}
extending the natural one with $Z_{E^{\circ}}, W_{E^{\circ}}, E^{\circ}$, and such that $\theta$ is equivariant for the first $\mb{Z}/2$-action, and the entire diagram is equivariant for the second $\mb{Z}/2$-action.
\end{itemize}
\end{proposition}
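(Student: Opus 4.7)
The plan is to compactify $W_{E^{\circ}}$ and $Z_{E^{\circ}}$ over $E$ via Deligne--Mumford's semistable reduction theorem, after verifying that the relevant monodromies at points of $E[2]$ are unipotent; the equivariance under the specified groups then follows from uniqueness of stable models.

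First I would compute local monodromies. Both $w_{E^{\circ}}$ and $z_{E^{\circ}}$ are pullbacks along the degree $2$ cover $f$ of the corresponding families $w_X: W_X\to X$ and $z_X: Z_X \to X$. Since $W'$ is precisely the double cover of $E\times \mathbb{P}^1$ that trivializes $m^*\chi$ on $E\times X\setminus(\Gamma \cup E\times \{\infty\})$, the $\ell$-adic local system $R^1(w_X)_*\Qlbar$ decomposes as a constant piece (coming from $H^1(E)$ via the forgetful map $W_x \to E$) plus $MC_{-1}(f_*\Qlbar)|_X$; similarly, after decomposing into $\mathbb{Z}/m$-isotypic components, $R^1(z_X)_*\Qlbar$ decomposes as constant pieces plus the middle convolutions $MC_{-1}(f_*\mathbb{L})|_X$ as $\mathbb{L}$ runs over the characters of $\mathbb{Z}/m$ pulled back from $\tilde{E}$. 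By \autoref{prop:localmon}, each such middle convolution is quasi-unipotent, with eigenvalues in $\{1\}$ at $\{0,1,\lambda\}$ and in $\{-1\}$ at $\infty$ and Jordan blocks of size at most $2$. Since $f$ has ramification index $2$ at every point of $E[2]$, pulling back squares each local monodromy, and the square of any such matrix is unipotent (explicitly, $(-I + N)^2 = I - 2N$ when $N^2=0$).

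Second, with unipotent monodromy established, Deligne--Mumford's theorem, applied over each formal neighborhood of a point of $E[2]$, produces stable (in particular semistable) compactifications $w_E: W_E \to E$ and $z_E: Z_E \to E$. The equivariance under $\mathbb{Z}/2\times \mathbb{Z}/2$ on $W_E$ and $(\mathbb{Z}/m \times \mathbb{Z}/2)\times \mathbb{Z}/2$ on $Z_E$ is then automatic from uniqueness of stable models: any automorphism of the generic fiber extends uniquely to the stable compactification. The map $\theta: Z_E \to W_E$ extending $Z_{E^{\circ}}\to W_{E^{\circ}}$ (induced by $\eta$) is produced by the same functoriality and inherits the asserted equivariance.

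The main technical obstacle is the fiber over $0_E$. There $D_S = \Gamma + E \times \{\infty\}$ is tangent to order $2$ at $(0_E, \infty)$, so $W'$ carries a surface singularity at that point (of type $A_3$ after a coordinate change $v = w - u^2/2$, giving the local equation $s^2 = v^2 - u^4/4$). To give an explicit semistable model I would first blow up $S$ at $(0_E, \infty)$, which separates the two branches of $D_S$; then take the double cover on the blowup, resolve the resulting surface, and pull back to $E$. At the other three points $e_i \in E[2]$ the local picture is the standard family $\{v^2 = w^2 - u^2\}$ of double covers of a disk with colliding branch points $\pm u$, which is already semistable with nodal central fiber. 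The same local analysis, carried out equivariantly for the $\mathbb{Z}/m$-action on $\tilde{E}$ (which is free and commutes with all other data), handles $Z_E$.
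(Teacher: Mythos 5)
Your proposal is correct and takes essentially the same route as the paper: both prove semistability by decomposing $R^1w_{X,*}$ and $R^1z_{X,*}$ into constant pieces plus middle convolutions, using the local monodromy computation (\autoref{prop:localmon} and \cite[Lemma 5.1]{drpainleve}) to get eigenvalues $\pm 1$, observing that pullback along the $2:1$ cover $E^{\circ}\to X$ squares these to unipotent matrices, and then invoking the N\'eron--Ogg--Shafarevich criterion together with the fact that a curve has semistable reduction iff its Jacobian does (the paper builds the compactification by normalizing $W', Z'$ in the function fields of $W_{E^{\circ}}, Z_{E^{\circ}}$ rather than via stable models, but the substance is identical). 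Two minor caveats: extending $\theta$ needs slightly more than ``functoriality,'' since uniqueness of stable models only extends isomorphisms, not the finite map $Z_{E^{\circ}}\to W_{E^{\circ}}$ (e.g.\ take $W_E=Z_E/(\mathbb{Z}/m)$ followed by contraction, or use the paper's normalization), and in your optional local analysis a single blowup only makes the two tangent branches of $D_S$ transverse---a second blowup is needed to separate them.
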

\begin{proof}[Proof sketch]
	The desired compactifications are obtained by normalizing $W', Z'$ in the function fields of $W_{E^\circ}, Z_{E^\circ}$. The three bullets are clear. One may prove semistability in a number of ways, e.g.~by computing with local models; we now sketch a proof of semistability by analyzing the monodromies of the local systems $R^1w_{E^\circ, *}\Qlbar, R^1z_{E^\circ, *}\Qlbar$ about the points of $E[2]$. As a curve has semistable reduction if and only if its Jacobian does, it suffices to show, by the N\'eron-Ogg-Shafarevich criterion, that these local monodromies are unipotent.
	
Recall that we have the map $z_X: Z_X\rightarrow X$. It suffices to show (as $E\to \mathbb{P}^1$ is ramified to order $2$ at $0,1,\infty, \lambda$) that the local monodromies of $R^1z_{X, *}\Qlbar$ around all punctures have generalized eigenvalues $1$ or $-1$. This is true for $R^1z_{X, *}\Qlbar/R^1w_{X, *}\Qlbar$, since this is a sum of middle convolutions as in \autoref{thm:explicit-form}, so it remains to show the same for $R^1w_{X, *}\Qlbar$. 

Denote the map $E^{\circ}\rightarrow X$ by  $\pi^{\circ}$, and let $\tau$ denote the non-trivial summand of $R^1\pi^{\circ}_*\Qlbar$. Then $R^1w_{X*}\Qlbar$ is a direct sum of the  local system associated to the constant family $E\times X$, and $MC_{-1}(\tau)$. For $MC_{-1}(\tau)$, the local monodromies at the punctures satisfy $(\star)$, again by \cite[Lemma 5.1]{drpainleve}.
\end{proof}

\begin{definition}\label{defn:mchiggs}
\begin{enumerate}
\item
For each character $\chi: \mb{Z}/m \rightarrow \fpbar$, we define $\widetilde{MC}_{-1}^{dR}(\chi)$ to be the $\chi\otimes (-1)$-isotypic\footnote{here $\chi\otimes (-1)$ denotes the  character of $(\mb{Z}/m\times \mb{Z}/2)$, the first factor of the group $(\mb{Z}/m\times \mb{Z}/2)\times \mb{Z}/2$ from \autoref{prop:resolve},   given by tensoring $\chi$ with the $-1$-character of $\mb{Z}/2$} part of the relative log-de Rham cohomology of $(Z_E, z_E^{-1}(E[2])$  over $(E, E[2])$. 

\item
Moreover,  the additional $\mb{Z}/2$-action in Proposition~\ref{prop:resolve} implies that  each  $\widetilde{MC}_{-1}^{dR}(\chi)$ has a $\mb{Z}/2$-equivariant structure with respect to the covering $E\rightarrow \mb{P}^1$, and so descends to a  parabolic logarithmic de Rham bundle on $(\mb{P}^1, D)$ by \autoref{prop:descendparabolic}, which we denote by $MC_{-1}^{dR}(\chi)$; moreover the latter is equipped with a Hodge filtration descended from $E$. Finally, let $MC_{-1}(\chi)$ denote the parabolic, graded, logarithmic, Higgs bundle on $(\mb{P}^1, D)$ given by the associated graded of $MC_{-1}^{dR}(\chi)$ with respect to the Hodge filtration.
\end{enumerate}
\end{definition}

For each $x\in \mb{P}^1$ with $x\neq 0,1,\lambda, \infty$, let $W_x$ be the fiber of $W'$ above $x$; $W_x$ is a smooth curve and is moreover  equipped with a $\mb{Z}/2$-covering map $h_{W, x}: W_x\rightarrow E_x=E\times \{x\}$.

By \autoref{prop:descendparabolic}, the $\mb{Z}/2$-equivariant bundle with connection  $\mc{O}_{W_x}$ descends to a parabolic logarithmic connection $J_x$ on $E_x$. Let $L_{\chi, x}$ denote the $\chi$-isotypic component of $\eta_*\mc{O}_{\tilde{E}}$. Note that the isomorphism classes of $E_x$ and $L_{\chi,x}$ are  independent of the choice of $x$, and we simply denote them by $E, L_{\chi}$ respectively. The line bundle $L_{\chi}$ has degree zero, and so is isomorphic to $\mc{O}_E(e-\tilde{y})$ for a unique $\tilde{y}\in E$; we say that $y\defeq \pi(\tilde{y})$ is the point corresponding to $L_{\chi}$.

\begin{proposition}\label{prop:higgszero2}
The fiber of $MC_{-1}(\chi)$ at $x$ is given by 
\begin{equation}\label{eqn:mcfiber}
H^1(L_{\chi}\otimes J_x) \oplus H^0(L_{\chi}\otimes J_x \otimes \Omega^1_{E}(f^{-1}(x))),
\end{equation}
with the first (respectively second) factor being the degree one (zero) piece. 

The Higgs field vanishes only at the point $y$ corresponding to $L_{\chi}$. Moreover the graded vector bundle underlying $MC_{-1}(\chi)$ is isomorphic to $\mc{O}\oplus \mc{O}(-1)$.
\end{proposition}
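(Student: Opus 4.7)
The plan is to transport the characteristic-zero computation of the Higgs field (\autoref{lemma:higgsvanish}) to positive characteristic via double parabolic descent along the two covers $\eta:\tilde E\to E$ and $w:W'\to S$. First, by proper base change applied to the relative log de Rham cohomology of $z_E:Z_E\to E$ and restricting at $x\in X$, the fiber of $MC_{-1}^{dR}(\chi)$ at $x$ is identified with the $\chi\otimes(-1)$-isotypic piece of $H^1_{dR}(Z_x,\log z_x^{-1}(f^{-1}(x)\cup\infty))$. Applying \autoref{prop:descendparabolic} twice---once to extract the $\chi$-part along $\tilde E\to E$, producing the coefficient $L_\chi$, and once to extract the $-1$-part along $W_x\to E_x$, producing the parabolic line bundle $J_x$---identifies this fiber with the parabolic log de Rham cohomology of $E$ with coefficients $L_\chi\otimes J_x$. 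Hodge-de Rham degeneration in this low-degree, rank-one, curve setting then yields the decomposition (\ref{eqn:mcfiber}) with the claimed Hodge grading.

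Second, I would compute the Higgs field at $x$---i.e., the $E_1$ differential of the Hodge-de Rham spectral sequence---as the Serre-dual of the cup product
$$H^0(E,L_\chi\otimes J_x\otimes \omega_E(x_1+x_2))\otimes H^0(E,L_\chi^{-1}\otimes J_x^{-1}\otimes \omega_E)\to H^0(E,\omega_E^{\otimes 2}(x_1+x_2))\to T^*_x X,$$
precisely as in \autoref{lemma:higgsvanish}. At $x=y$, where $L_\chi\simeq \mathcal{O}(e-\tilde y)$, the relevant sections become sections of $\omega_E$ on the nose, and the diagram used in the proof of \autoref{lemma:higgsvanish} factors the Higgs field through the constant composite $E\to \mathcal{M}_{1,2}\to \mathcal{M}_{1,1}$, forcing vanishing. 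For $x\neq y$ the factorization fails and a direct check, combined with the generic rank-$2$ count from \autoref{prop:localmon}, shows that the Higgs field is nonzero; thus $y$ is the unique zero.

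Finally, to identify the underlying graded bundle with $\mathcal{O}\oplus \mathcal{O}(-1)$, I would determine the two degrees. The total degree $\deg\mathcal{E}=-1$ is forced by the residue traces of the logarithmic connection $MC_{-1}^{dR}(\chi)$, computed via \autoref{prop:localmon} and \cite[B.3]{esnault1986logarithmic}. Then either by direct pushforward computation of $F^1$ from the explicit fiberwise formula of (\ref{eqn:mcfiber}), or by lifting the entire setup to $W(k)$ and invoking \autoref{prop:star-decomposition}, one gets $\deg F^1=0$ and $\deg(\mathcal{E}/F^1)=-1$; the extension splits as $\on{Ext}^1(\mathcal{O}(-1),\mathcal{O})=0$. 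The main obstacle will be rigorously verifying Hodge-de Rham degeneration and the compatibility of the double parabolic descent with Hodge filtrations in characteristic $p$; once this bookkeeping is in place, the Higgs-field vanishing argument is a mechanical transposition of the proof of \autoref{lemma:higgsvanish}.
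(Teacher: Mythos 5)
Your first step (identifying the fiber of $MC_{-1}^{dR}(\chi)$ at $x$ with the $\chi\otimes(-1)$-isotypic piece of the de Rham cohomology of $Z_x$, descending to a logarithmic connection on $L_\chi\otimes J_x$ on $E$, and reading off the graded pieces \eqref{eqn:mcfiber}) is the same as the paper's, and your second option for the degree computation (lift everything to $W(k)$, apply \autoref{prop:star-decomposition} to the generic fiber using the unipotence of local monodromy at $0,1,\lambda$ from \autoref{prop:localmon}, then specialize Chern classes) is exactly what the paper does. The divergence, and the gap, is in how you locate the zero of the Higgs field.

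You propose to compute the Higgs field directly in characteristic $p$ as the Serre-dual of the multiplication map into $H^0(\omega_E^{\otimes 2}(x_1+x_2))\to T^*_xX$, ``precisely as in \autoref{lemma:higgsvanish}.'' But that explicit formula is only established in the paper in the complex setting: it comes from specializing \cite[Theorem 5.1.6]{llcanonical}, a characteristic-zero statement about unitary local systems and their Deligne extensions, and the vanishing argument uses the map $E\to\mc{M}_{1,2}\to\mc{M}_{1,1}$ in that setting. Transposing this identification of the Higgs field (essentially a Kodaira--Spencer/cup-product description of the $E_1$-differential for the isotypic piece of the family $Z_E\to E$, compatible with the double parabolic descent) to characteristic $p$ is plausible but is a genuine piece of work that you assert rather than prove; it is not covered by the results cited in the paper. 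Likewise, your claim that for $x\neq y$ ``a direct check, combined with the generic rank-$2$ count from \autoref{prop:localmon}, shows the Higgs field is nonzero'' does not work as stated: \autoref{prop:localmon} is an $\ell$-adic local-monodromy statement and gives no control on the mod-$p$ Higgs field; the uniqueness of the zero should instead come from the Higgs field being a nonzero section of $\mc{O}(1)$ once the graded bundle is known. The paper avoids all of this: since it has already lifted $\lambda$, $\chi$, and the construction to $W(\mb{F}_q)$ for the degree computation, it applies \autoref{lemma:higgsvanish} to the generic (characteristic-zero) fiber, where the zero of the Higgs field is the point corresponding to $L_{\chi_W}$, and then specializes that point to conclude the vanishing at $y$. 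If you repair your argument by routing the Higgs-field computation through the same lift rather than redoing it mod $p$, it becomes the paper's proof.
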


\begin{proof}
Since the maps $Z_X, W_X \rightarrow X$ are  smooth, the restriction $MC^{dR}_{-1}(\chi)|_X$ is given by the quotient of the relative de Rham cohomologies of $Z_X$ and $W_X$ over $X$. Therefore the fiber $MC_{-1}^{dR}(\chi)|_x$ is canonically identified with the  $\chi \otimes (-1)$-isotypic component of $H^1_{dR}(Z_x)$. 

 Let $h_{Z, x}: Z_x\rightarrow E$ be the natural map. To take the $\chi\otimes (-1)$ isotypic component, we can first take the $\chi\otimes (-1)$-component of $h_{Z,x, *}^{dR}\mc{O}_{Z_x}$; the latter is a logarithmic connection on the line bundle $L_{\chi}\otimes J_x$, with poles along $f^{-1}(x)$. The fiber of $MC_{-1}^{dR}(\chi)$ at $x$ is then given by taking  $\mb{H}^1$ of the associated de Rham complex 
\[
L_{\chi}\otimes J_x \rightarrow L_{\chi}\otimes J_x\otimes \Omega^1_{E}(f^{-1}(x)),
\]  
with the Hodge filtration given by the naive filtration. Therefore the fiber of the Higgs cohomology is given by \eqref{eqn:mcfiber}. 

Denote the graded vector bundle underlying  $MC_{-1}(\chi)$ by $\mc{F}$;  we now prove that  $\mc{F}\simeq \mc{O}\oplus \mc{O}(-1)$ as a graded bundle.

Suppose $\lambda_W\in W(\mb{F}_q)$ is a lift of $\lambda$, so that the divisor $D$ lifts canonically to a divisor $D_W\subset \mb{P}^1_W$, and we also have the pair $(E_W, E_W[2])$ lifting $(E, E[2])$. Fixing a lift $\chi_W: \mb{Z}/m \rightarrow W(\fpbar)^{\times}$ of $\chi$, we can then define a graded logarithmic Higgs bundle $MC_{-1, \mb{P}^1_W}(\chi_W)$ on $(\mb{P}^1_W, D_W)$ exactly as in \autoref{defn:mchiggs}, which lifts $MC_{-1}(\chi)$.   



Let us write the generic fiber $MC_{-1, \mb{P}^1_W}(\chi_W)[1/p]$ as $M\oplus N $, with $M$ (respectively $N$) being the degree one (resp. zero) piece for the Hodge grading; this is a stable Higgs bundle   of degree zero on $\mb{P}^1_{W[1/p]}$, with non-vanishing Higgs field. Under the usual Simpson correspondence, there is some $\mb{L}$ such that this Higgs bundle corresponds to the local system  $MC_{-1}(f, \mb{L})$ in the notation of \autoref{defn:mcf}, which has  unipotent monodromy around $0, 1, \lambda_W$ by \autoref{prop:localmon}; therefore the parabolic weights are zero at these points. This forces $\deg(M)=0,$ $\deg(N)=-1$ by \autoref{prop:star-decomposition}; since Chern classes are invariant under specialization, the same is true of $\mc{F}$, as required.


Finally,  let $L_{\chi_W}$ denote the $\chi_W$-isotypic component of $\eta_*\mc{O}_{\tilde{E}_W}$, which is a line bundle lifting $L_{\chi}$. By \autoref{lemma:higgsvanish},  $\theta$ vanishes at the point $\tilde{x} \in \mb{P}^1_{W[1/p]}$ corresponding  to $L_{\chi_W}$, and hence the Higgs field of $MC_{-1}(\chi)$ vanishes at the point $x$ corresponding to $L_{\chi}$, as claimed.  
\end{proof}

\begin{corollary}\label{lemma:higgspoint}
For all choices of $\eta: \tilde{E}\rightarrow  E$ and $\chi$,  $MC_{-1}(\chi)$ corresponds to a point in $\mc{M}_{HIG} \simeq \mc{M}_{\frac{1}{2}\infty}$, where these spaces are defined in  \ref{section:stateconj}, \ref{section:translate}.
\end{corollary}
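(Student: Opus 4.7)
The plan is to verify directly that $MC_{-1}(\chi)$ satisfies the conditions to be a point of $\mc{M}_{\frac12\infty}$: namely, that it is a graded semistable parabolic logarithmic Higgs bundle on $(\mb{P}^1,D)$ with underlying graded parabolic bundle $(\mc{O}\oplus \mc{O}(-1))(-\tfrac12\infty)$, equipped with a nilpotent Higgs field. By \autoref{prop:higgszero2}, the underlying (non-parabolic) graded vector bundle of $MC_{-1}(\chi)$ is $\mc{O}\oplus \mc{O}(-1)$ and the Higgs field $\theta$ is non-zero (vanishing at exactly one point). Nilpotency is automatic: the grading has only two pieces and the Higgs field is homogeneous of degree $-1$, so $\theta^{2}=0$. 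Thus only the parabolic structure at $D$ and the semistability remain.

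For the parabolic weights, I would lift everything to $W(\mb{F}_q)$ exactly as in the final paragraph of the proof of \autoref{prop:higgszero2}, obtaining the lift $MC_{-1,\mb{P}^1_W}(\chi_W)$. Its generic fiber is, under the Simpson correspondence, the Hodge-theoretic Higgs bundle attached to the irreducible unitary local system $MC_{-1}(f,\mb{L})$ for $\mb{L}$ corresponding to $L_{\chi_W}$. By \autoref{prop:localmon} this local system satisfies $(\star)$, and then by \autoref{prop:star-decomposition} the associated parabolic Higgs bundle has the prescribed underlying graded parabolic form $(\mc{O}\oplus \mc{O}(-1))(-\tfrac12\infty)$, with weights $0$ at $0,1,\lambda$ and $\tfrac12$ at $\infty$. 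Under the descent of \autoref{prop:descendparabolic}, these weights are read off from the $\mb{Z}/2$-isotypic decomposition of the residues at $E[2]$; this is discrete combinatorial data that deforms flatly with the lift, so the parabolic structure in characteristic $p$ matches the one in characteristic zero.

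For semistability, a direct argument suffices. Since $\theta\ne 0$, a graded $\theta$-invariant sub-line-bundle cannot be the degree-$1$ summand $\mc{O}$ (whose image under $\theta$ is non-zero in degree $0$), and so must lie inside the degree-$0$ summand $\mc{O}(-1)$; any such subsheaf has ordinary degree $\leq -1$, hence parabolic degree $\leq -1+\tfrac12=-\tfrac12<0$, strictly less than the parabolic slope $0$ of $(\mc{O}\oplus \mc{O}(-1))(-\tfrac12\infty)$. Thus $MC_{-1}(\chi)$ is parabolically stable, a fortiori semistable, and defines a point of $\mc{M}_{\frac12\infty}\simeq \mc{M}_{HIG}$. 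The main obstacle is verifying the parabolic weights in characteristic $p$; I expect this to be handled, as indicated above, by lifting to $W(\mb{F}_q)$ and invoking the rigidity of the $\mb{Z}/2$-equivariant data governing the descent.
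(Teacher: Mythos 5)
Your proposal is correct and takes essentially the paper's (implicit) route: the paper states this corollary without a separate proof, treating it as immediate from \autoref{prop:higgszero2} — underlying graded bundle $\mc{O}\oplus\mc{O}(-1)$, nonzero Higgs field, and parabolic weights controlled via the lift to $W(\mb{F}_q)$ exactly as in the last paragraph of that proposition's proof. The extra verifications you supply — nilpotency from the two-step grading, the weights $0$ at $0,1,\lambda$ and $\tfrac12$ at $\infty$ obtained by specializing the $\mb{Z}/2$-isotypic data (valid since $p\neq 2$, so eigen-decompositions of free $W$-modules reduce compatibly), and the direct parabolic stability check — are precisely the details the paper leaves implicit (indeed it hedges with ``tensoring with a rank one Higgs bundle if necessary'' when applying the corollary), so your write-up is if anything more complete.
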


\subsection{}

Since $z_E: Z_E\rightarrow E$ and $w_E: W_E\rightarrow E$ are semistable families of curves, we may define their relative log Higgs cohomologies (i.e. the associated graded of relative  log de Rham cohomology with respect to the Hodge filtration), and  by another application of Proposition~\ref{prop:descendparabolic} we may descend to obtain parabolic logarithmic Higgs bundles on $\mb{P}^1$, which we denote by $R^1z_*^{HIG}\mc{O}_Z$ and $R^1w_*^{HIG}\mc{O}_W$ respectively. Let $(\mc{E}, \theta)$ denote the quotient Higgs bundle $R^1z_*^{HIG}\mc{O}_Z/R^1w_*^{HIG}\mc{O}_W$. Similarly we have the logarithmic flat bundles $R^1z_*^{dR}\mc{O}_Z, R^1w_*^{dR}\mc{O}_W, (\mc{E}_{dR}, \nabla)\defeq R^1z_*^{dR}\mc{O}_Z/R^1w_*^{dR}\mc{O}_W$.



\begin{proposition}\label{prop:periodic}
$C^{-1} (\mc{E}, \theta)|_X \simeq (\mc{E}_{dR}, \nabla)|_X$, and therefore $\Gr \circ C^{-1} (\mc{E}, \theta)|_X \simeq (\mc{E}, \theta)|_X$. Here $C^{-1}$ denotes the usual Cartier transform, and $\Gr$ the associated graded with respect to the Hodge filtration on de Rham cohomology. 
\end{proposition}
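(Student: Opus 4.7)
The strategy is to apply the Ogus--Vologodsky compatibility of the inverse Cartier transform with cohomology of smooth proper families. In the form we need: if $\pi: Y \to S$ is a smooth proper family of curves between smooth $k$-schemes that admits a compatible lift to $W_2$, then, provided $p$ is sufficiently large (here $p \neq 2$ suffices), there is a natural isomorphism
\[
C^{-1}(R^1\pi_*^{HIG}\mc{O}_Y) \simeq R^1\pi_*^{dR}\mc{O}_Y,
\]
and the Hodge-to-de Rham spectral sequence degenerates, so that $\Gr$ of the right-hand side recovers the left.

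Over $X$, both $z_X: Z_X \to X$ and $w_X: W_X \to X$ are smooth proper families of curves, and both admit compatible $W_2$-lifts since the divisors defining $Z'$ and $W'$ are already defined over $W(\mb{F}_q)$ via any chosen lift of $\lambda$, and the cyclic \'etale cover $\tilde E \to E$ lifts canonically to $W_2$. Applying the above termwise gives
\[
C^{-1}(R^1 z_{X,*}^{HIG}\mc{O}_{Z_X}) \simeq R^1 z_{X,*}^{dR}\mc{O}_{Z_X}, \qquad C^{-1}(R^1 w_{X,*}^{HIG}\mc{O}_{W_X}) \simeq R^1 w_{X,*}^{dR}\mc{O}_{W_X}.
\]
Since $p$ is coprime to $2m$, the $(\mb{Z}/2 \times \mb{Z}/m)\times \mb{Z}/2$-action from \autoref{prop:resolve} decomposes both bundles into isotypic direct summands preserved by $C^{-1}$. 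In particular, the natural short exact sequence
\[
0 \to R^1 w_{X,*}^{\bullet}\mc{O}_{W_X} \to R^1 z_{X,*}^{\bullet}\mc{O}_{Z_X} \to (\mc{E}^{\bullet}, \cdot)|_X \to 0
\]
(for $\bullet \in \{HIG, dR\}$) is split, and $C^{-1}$ preserves the quotient. This yields $C^{-1}(\mc{E}, \theta)|_X \simeq (\mc{E}_{dR}, \nabla)|_X$.

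For the second statement, apply $\Gr$ with respect to the Hodge filtration. By construction, the Hodge filtration on $(\mc{E}_{dR}, \nabla)|_X$ is inherited from the de Rham cohomology of the smooth proper families $z_X, w_X$, whose associated graded is the corresponding Higgs cohomology; thus $\Gr (\mc{E}_{dR}, \nabla)|_X \simeq (\mc{E}, \theta)|_X$, and combining gives $\Gr \circ C^{-1}(\mc{E}, \theta)|_X \simeq (\mc{E}, \theta)|_X$. The main obstacle is verifying the Ogus--Vologodsky compatibility in the equivariant setting, so that the isomorphism respects the group actions used to descend from $E$ to $\mb{P}^1$ and to extract the relevant quotient; since $p \nmid 2m$ all isotypic decompositions are preserved by $C^{-1}$, reducing this to a formal check along the lines of the analogous arguments in Lan--Sheng--Zuo and Krishnamoorthy--Sheng (cited in \autoref{section:recollections}).
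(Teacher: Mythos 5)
Your argument is correct and follows essentially the same route as the paper: restrict to $X$, apply the Ogus--Vologodsky comparison (their Theorem 3.4) to the smooth proper families $z_X\colon Z_X\to X$ and $w_X\colon W_X\to X$ using their $W_2$-lifts, pass to the quotient defining $(\mc{E},\theta)|_X$, and note that the $\Gr$ statement is then immediate. The extra isotypic-splitting discussion you include is a fine way to handle compatibility with the quotient, but it is not a departure from the paper's proof.
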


\begin{proof}
We have $(\mc{E}, \theta)|_X\simeq R^1z_{X*}^{HIG} \mc{O}_{Z_X}/R^1w_{X*}^{HIG} \mc{O}_{W_X}$. The first statement now follows from \cite[Theorem 3.4]{ogusvologodsky} since we have lifts of $Z_X, W_X$ to $W_2$, and the second  is immediate.
\end{proof}

\begin{proof}[Proof of Conjecture~\ref{conj:syz}]
Let $g: \mb{P}^1\rightarrow \mb{P}^1$ be the map making \eqref{eqn:syz} commute. Then $g$ has degree $p^2$, and the same is true of $\Gr\circ C^{-1}_{par}$  by \cite[\S~4.3]{syz}; therefore it suffices to show that $\Gr\circ C^{-1}_{par}$ and $g$ agree on infinitely many closed points. For any $m\geq 1$ odd, let $\tilde{E}\rightarrow E$ be a non-trivial \'etale $\mb{Z}/m$-covering, and for any character $\chi: \mb{Z}/m \rightarrow \fpbar$ we have the Higgs bundles  $MC_{-1}(\chi)$ on $(\mb{P}^1, D)$. By \autoref{lemma:higgspoint}, by tensoring with a rank one Higgs bundle if necessary, these Higgs bundles correspond to points in  $\mc{M}_{HIG}\simeq \mb{P}^1$ and we will show that $g$ and $\Gr\circ C^{-1}_{par}$ agree on all of these points.

Recall that $F_k: k\rightarrow k$ denotes absolute Frobenius.  The Higgs-de Rham functor $\Gr\circ C^{-1}$ is $F_k$-linear, and by Proposition~\ref{prop:periodic} $\Gr\circ C^{-1}(\mc{E}, \theta)|_X \simeq (\mc{E}, \theta)|_X$. Now  $MC_{-1}(\chi)|_X$ is the $\chi$-isotypic component for the $\mb{Z}/m$-action on $(\mc{E}, \theta)|_X$, and therefore  $\Gr\circ C^{-1}(MC_{-1}(\chi))|_X$ is the $\chi^p$-isotypic component. On the other hand, by definition of the $MC_{-1}(\chi)$'s  the $\chi^p$-isotypic component is given by $MC_{-1}(\chi^p)|_X$. Therefore $\Gr \circ C^{-1}(MC_{-1}(\chi))|_X\simeq MC_{-1}(\chi^p)|_X$, and hence $\Gr \circ C_{par}^{-1}(MC_{-1}(\chi))$ and $MC_{-1}(\chi^p)$ are isomorphic up to tensoring with a rank one Higgs bundle with trivial Higgs field \footnote{note that $\Gr$, which is a priori the Hodge filtration restricted to $MC_{-1}(\chi)$, is also the Harder-Narasimhan filtration, which is what  Sun-Yang-Zuo use to formulate their conjecture}.   By the explicit description of $MC_{-1}(\chi)$ given in Proposition~\ref{prop:higgszero2}, in particular the claim about the vanishing locus of the Higgs field,  we may conclude.
\end{proof}

\section{Some loose ends}
\subsection{}
For each $\lambda \in \mb{F}_q$, suppose we have a  rank two $\Qlbar$-local system $\mb{V}$ on $X=\mb{P}^1-\{0, 1, \lambda, \infty\}$ satisfying $(\star)$. Let $\lambda_W\in W(\mb{F}_q)$ be a lift of $\lambda$, and let $X_W=\mb{P}^1-\{0, 1, \lambda_W, \infty\}$ be the corresponding lift of $X$, and $\tilde{f}$ the lift of $f$. By definition $\mb{V}$ corresponds to a representation of the tame fundamental group $\pi_1^t(X)$; the specialization isomorphism $\pi_1^t(X_W)\simeq \pi_1^t(X)$ gives a local system on $X_W$, which we denote by $\mb{V}_W$.
\begin{proposition}
The local system $\mb{V}_W$ is motivic, i.e.  there exists an abelian scheme $\pi: \mc{A}\rightarrow X_W$ such that $\mb{V}$ appears as a direct summand of $R^1\pi_*\Qlbar$.
\end{proposition}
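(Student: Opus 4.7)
The strategy is to invert Katz's middle convolution to reduce to rank one local systems on $E$, and then use Frobenius-invariance to force finite monodromy.

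First, I would apply $MC_{-1}$ in the $\ell$-adic setting to $\mb{V}$. The Euler characteristic analysis from the proof of \autoref{prop:localmon}, together with Katz's formulas for local monodromy under middle convolution, shows that $\mb{W}\defeq MC_{-1}(\mb{V})$ has rank two with semisimple local monodromies at $0, 1, \lambda, \infty$ having eigenvalues $\pm 1$. It follows that $f^*\mb{W}$ extends unramifiedly across $E[2]$ to a local system on $E$ carrying an action of the hyperelliptic involution $\sigma$; this splits as $\mb{L}\oplus \sigma^*\mb{L}$, so $\mb{W}\simeq f_*\mb{L}$ for some rank one $\Qlbar$-local system $\mb{L}$ on $E_{\mb{F}_q}$.

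The key step, where positive characteristic enters essentially, is to show that $\mb{L}$ has finite geometric monodromy. The restriction of $\mb{L}$ to the geometric fundamental group of $E$ is a continuous Frobenius-invariant character to $\Qlbar^\times$. For any prime $\ell'\neq \ell$ (including $\ell'=p$), any continuous character from the pro-$\ell'$ part of this fundamental group to $\Qlbar^\times$ lands in the subgroup $\mu_{\ell'^\infty}\subset \Qlbar^\times$, which is discrete in the $\ell$-adic topology; by compactness of the source, the image is finite. The essential case is the pro-$\ell$ part, namely $T_\ell(E)$: applying the $\ell$-adic logarithm identifies continuous characters (modulo torsion) with $\Qlbar$-linear maps $T_\ell(E)\otimes\Qlbar\to \Qlbar$, and Frobenius-invariance forces such a map to be a fixed vector of the Frobenius endomorphism $\phi$. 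But the eigenvalues of $\phi$ on $T_\ell(E)$ are Weil numbers of absolute value $\sqrt{q}\neq 1$, so no nonzero fixed vector exists. Hence $\mb{L}$ has finite geometric monodromy of some order $m$, necessarily prime to $p$.

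Finally, the geometric part of $\mb{L}$ corresponds to a cyclic \'etale cover $\tilde{E}\to E$ of degree $m$, which by SGA 1, Expos\'e XIII lifts uniquely to an \'etale cover $\tilde{E}_W\to E_W$, yielding a rank one local system $\mb{L}_W$ on $E_W$ specializing to $\mb{L}$. Since middle convolution commutes with specialization (by smooth and proper base change), we have $\mb{V}_W\simeq MC_{-1}(f_*\mb{L}_W)$ as local systems on $X_W$. By \autoref{thm:explicit-form}, $\mb{V}_W$ appears as a direct summand of the cohomology of $Z'_m\to X_W$; taking $\mc{A}\defeq \Jac(Z'_m/X_W)/\Jac(W'/X_W)$ exhibits $\mb{V}_W$ as a summand of $R^1\pi_*\Qlbar$ for the abelian scheme $\pi:\mc{A}\to X_W$. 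The main obstacle is the middle step: in characteristic zero, non-torsion rank one local systems on $E$ yield non-motivic $\mb{V}$ satisfying $(\star)$ (by \autoref{prop:zuoshort}), so the finiteness of $\mb{L}$ in positive characteristic is the essential new input and the only place where the hypothesis $\lambda\in\mb{F}_q$ is used.
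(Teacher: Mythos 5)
Your overall route is the same as the paper's: apply $MC_{-1}$ to $\mb{V}$, note that the result has rank two with semisimple local monodromies of eigenvalues $\pm 1$, pull back along $f$ and extend across $E[2]$ to reduce to a rank one local system on $E$, lift to $E_W$, and convolve back to realize $\mb{V}_W$ in (the Jacobians of) the family of \autoref{thm:explicit-form}. Your explicit verification that the rank one piece has finite geometric monodromy (Frobenius-invariance plus the weight argument on $T_\ell E$) is left implicit in the paper, and it is genuinely needed to run the $Z'_m$/Jacobian construction, so this is a welcome addition rather than a detour.

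There is, however, one step where your argument as written does not go through: you assert that the extended rank two local system on $E$ splits as $\mb{L}\oplus\sigma^*\mb{L}$ with $\mb{L}$ defined over $\mb{F}_q$. Geometrically it does split (the restriction of an irreducible representation to an index two subgroup is semisimple, and $\pi_1$ of $E_{\overline{\mb{F}}_q}$ is abelian), but the arithmetic Frobenius may interchange the two geometric characters; in that case the extension to $E$ is arithmetically irreducible, of the form $h_*\mb{L}$ with $\mb{L}$ defined only over $\mb{F}_{q^2}$, and then your cyclic cover $\tilde E$ and the abelian scheme built from it exist a priori only after base change to $W(\mb{F}_{q^2})$. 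The paper deals with exactly this by first reducing, via Weil restriction of abelian schemes, to proving the statement after pullback to $X_W\otimes \Spec(W(\mb{F}_{q^2}))$, and then splitting into the two cases; you need this (or some other descent device) to land on $X_W$ itself. Two smaller points: the claim that the order $m$ is ``necessarily prime to $p$'' is false when $E$ is ordinary (Frobenius-invariant characters of order $p^k$ do exist; compare the paper's remark on trace fields $\mb{Q}(\zeta_{p^k}+\zeta_{p^k}^{-1})$), but this is harmless, since \'etale covers of the proper curve $E$ lift to $E_W$ with no tameness hypothesis (this is the invariance of $\pi_1$ for proper schemes over a henselian base, not the tame specialization result you cite, which is only needed for $\pi_1^t(X)$ itself).
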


\begin{proof}
Note that, by using the Weil restriction of abelian schemes,  it suffices to show that the pullback of $\mb{V}_W$ to $X_W\otimes \Spec(W(\mb{F}_{q^2}))$ appears as a 
direct summand of an abelian scheme.  This now follows by applying  $MC_{-1}$ to $\mb{V}$. Indeed, by  \cite[Lemma 5.1]{drpainleve}, $MC_{-1}(\mb{V})$ has rank two, and the local monodromies at $0,1, \lambda, \infty$ are each semisimple with eigenvalues $1$ and $-1$. As in the rest of the paper, let  $f: E\rightarrow \mb{P}^1$ be the elliptic curve branched at $0, 1, \lambda, \infty$, and let $f^{\circ}: E^{\circ}=E-E[2]\rightarrow X$ denote the restriction to $X$.   Therefore  $f^{\circ *}MC_{-1}(\mb{V})$ extends to  a local system on $E$, and let us denote this by $\mb{W}$. Then either $\mb{W}\simeq \mb{L}\oplus [-1]^*\mb{L}$ for a rank one local system $\mb{L}$ on  $E$, or it is of the form $h_*\mb{L}$ for a rank one local system $\mb{L}$ on $E_{\mb{F}_{q^2}}$, and $h$ denotes the map   $E_{\mb{F}_{q^2}}\rightarrow E$. 
Now let $E_W\rightarrow \mb{P}^1$ denote the elliptic curve branched at $0,1, \lambda_W, \infty$. In the first case above,  $\mb{L}$ lifts canonically to a rank one local system $\widetilde{\mb{L}}$ on $E_W$, and we have $MC_{-1}(\tilde{f}, \widetilde{\mb{L}}) \simeq \mb{V}_W$, and therefore the latter appears in an  abelian scheme. In the second case, we similarly have that $\mb{V}_{W(\mb{F}_{q^2})}$, the pullback of $\mb{V}_W$ to $X_W\otimes W(\mb{F}_{q^2})$, is isomorphic to $MC_{-1}(\tilde{f}, \widetilde{\mb{L}})$, and again we may conclude. 
 \end{proof}

\subsection{Another conjecture of Sun-Yang-Zuo}\label{section:syz-deduction}
\begin{proof}[Proof of {\cite[Conjecture 4.10]{syz}}]
The only if direction is immediate from Conjecture~\ref{conj:syz}, proven above, or from \autoref{prop:zuoshort}. We now check the if direction.

Suppose we have $(E, \theta) \in \mc{M}_{HIG}$ with $(\theta)_0=f(x)$ for some torsion point $x$ of order prime to $p$. Then we have shown that $(E, \theta)$ appears in a family of abelian varieties which lifts to $W(\mb{F}_q)$, and therefore lifts to  a periodic Higgs bundle in $\mc{M}_{W(\mb{F}_q)}$, by \cite[Theorem 1.4]{zuosemistable}. 
\end{proof}

\bibliographystyle{alpha}
\bibliography{bibliography-middle-convolution}

\end{document}